\documentclass[11pt,a4paper]{article}
\usepackage[T1]{fontenc}
\usepackage{lmodern}
\usepackage[margin=28mm]{geometry}
\usepackage{amsmath,amssymb,amsthm,mathtools}
\usepackage{microtype}
\newcommand{\rev}[1]{{#1}}
\newenvironment{revision}{\par\begingroup}{\par\endgroup}
\usepackage[hidelinks]{hyperref}
\numberwithin{equation}{section}
\newtheorem{theorem}{Theorem}[section]
\newtheorem{lemma}[theorem]{Lemma}
\newtheorem{proposition}[theorem]{Proposition}
\newtheorem{corollary}[theorem]{Corollary}
\newtheorem{conjecture}[theorem]{Conjecture}
\theoremstyle{definition}

\newtheorem{problem}[theorem]{Problem}
\theoremstyle{remark}
\newtheorem{remark}[theorem]{Remark}
\newtheorem{example}[theorem]{Example}
\newcommand{\RR}{\mathbb R}
\newcommand{\tN}{\widetilde N}
\newcommand{\tP}{\widetilde{\mathcal P}}
\newcommand{\tPsi}{\widetilde\Psi}
\newcommand{\Sym}{\operatorname{Sym}}
\newcommand{\HH}{\mathcal H}
\newcommand{\bm}{\mathbf m}
\newcommand{\balpha}{\boldsymbol\alpha}

\newcommand{\cM}{\mathcal M}
\newcommand{\cK}{\mathcal K}
\hypersetup{pdftitle={Quasirandomness from balanced and repeated subgraph counts},
  pdfauthor={Guoqing Wang, Linlin Wang, Jun Yue}}

\title{Quasirandomness from balanced and repeated subgraph counts}
\author{{\textsc{Guoqing Wang, Linlin Wang, and Jun Yue}}}
\date{}

\begin{document}
\maketitle

\begin{abstract}

{
Let $F$ be a graph on $m$ vertices with at least one edge,
and let $0<p\le1$. We study when prescribed subgraph counts
across disjoint vertex classes characterize $p$-quasirandom
graph sequences. For positive multiplicities
$\bm=(m_1,\ldots,m_r)$ summing to $m$ and positive class
proportions $\balpha=(\alpha_1,\ldots,\alpha_r)$ with total
at most one, we first classify symmetrized repeated-class
counts for $r\ge3$: when $0<p<1$, they are forcing precisely
if $e(F)>1$ or $\balpha\ne\bm/m$. We also obtain a complete
classification of labelled transversal counts: for
\rev{$0<p<1$, the only nonforcing profile is $F=K_2$
with class proportions $(1/2,1/2)$.} Together,
these results settle conjectures of Janson and S\'os
[European J. Combin. 46 (2015)], Hatami, Hatami and Li
[European J. Combin. 56 (2016)], and Janson [arXiv:2606.20186v1 (2026)].
The common ingredient is a balanced-count theorem for every
graph with more than one edge, proved through a degree-sequence
inequality, and its consequence of additive rigidity.
Our arguments also determine the bad indices of the
associated functional nullspaces and isolate the exceptional
single-edge profiles. In particular, a non-divisibility
theorem for grouped elementary symmetric polynomials
excludes the higher-order obstructions in the repeated-class
setting. In contrast, for two repeated classes and every
$p\in(0,1)$, we construct nonquasirandom graph sequences
satisfying the crossing-triangle counts at proportions
$(1/3,2/3)$, with limiting edge density strictly below $p$.
These counterexamples answer the crossing-triangle question
negatively and show why the three-class assumption cannot
be omitted.
}

\end{abstract}

\noindent\textit{Keywords:} quasirandom graphs; restricted subgraph counts;
graph limits; elementary symmetric polynomials.

\noindent\textit{2020 Mathematics Subject Classification:}\ 05C80 (primary); 05C35 (secondary).\par

\section{Introduction}\label{sec:introduction}

The work of Thomason~\cite{Thomason} and Chung, Graham and
Wilson~\cite{CGW} established the equivalence of several
asymptotic counting and spectral conditions for dense graph
sequences. Simonovits and S\'os~\cite{SS} subsequently proved
that the counts of any fixed graph containing an edge determine
quasirandomness when tested in every vertex subset. More
precisely, let $F$ have $m$ vertices and $e(F)>0$, let
$0<p\le1$, and let $v_n=v(G_n)\to\infty$. Then the identities
$N(F,G_n;U^m)=p^{e(F)}|U|^m+o(v_n^m)$, uniformly over
$U\subseteq V(G_n)$, characterize $p$-quasirandomness.
Conlon, Fox and Sudakov~\cite{CFS} later obtained polynomial
quantitative bounds for this hereditary counting condition.

A natural question is whether quasirandomness can also be
characterized by counts across disjoint vertex classes of
prescribed sizes. We consider this problem in a setting that
allows several vertices of $F$ to lie in the same class.
The properties $\mathcal P^p_{\bm}(F;\balpha)$ and
$\tP^p_{\bm}(F;\balpha)$ retain a fixed labelling of $F$
and average over all labellings, respectively. The formal
definitions, including the uniform error terms and integer
rounding, are given in Section~\ref{sec:preliminaries}.
When each vertex of $F$ is assigned to a distinct class,
we call the counts transversal and omit the subscript $\bm$.
\rev{We write $\tN$ for the average of labelled counts over all
assignments of the vertices of $F$ to the prescribed sets.
We call class proportions \emph{complementary} when their sum is one.}
Our aim is to determine which of these restricted counting
properties characterize $p$-quasirandomness.

Huang and Lee~\cite{HL} proved that balanced transversal counts
characterize quasirandomness for complete graphs on at least
three vertices. Janson and S\'os~\cite[Theorems~2.11 and~2.12]{JS}
showed that, for every graph containing an edge, symmetrized
transversal counts characterize quasirandomness whenever the
class proportions are not balanced. \rev{In the balanced case,
they established the same conclusion for graphs with at least two
edges that are regular, are stars, or have at least two components
containing edges.} They
conjectured that the conclusion holds for every graph with at
least two edges, as stated below.

\begin{conjecture}[Janson--S\'os
{\cite[Conjecture~2.13]{JS}}]\label{conj:JS-balanced}
Let $F$ be a graph on $m$ vertices with $e(F)>1$, and let
$0<p\le1$. If a graph sequence $(G_n)$ with
$v_n=v(G_n)\to\infty$ satisfies
\[
\tN(F,G_n;U_1,\ldots,U_m)
=p^{e(F)}\prod_{i=1}^m|U_i|+o(v_n^m)
\]
uniformly over all pairwise disjoint sets
$U_1,\ldots,U_m\subseteq V(G_n)$ with
$|U_i|=\lfloor v_n/m\rfloor$, then $(G_n)$ is
$p$-quasirandom.
\end{conjecture}

{
The main results of this paper give two classifications of restricted-count
conditions. The first concerns symmetrized counts in which a class may
be repeated; the second concerns labelled transversal counts. We state
them first, before describing the balanced-count theorem and the
rigidity principle used in their proofs.
} {
Janson~\cite{J26} introduced repeated-class profiles, in which
several vertices of $F$ may be assigned to the same class. For at least
three classes, the following theorem gives the full classification.
}

\begin{theorem}\label{thm:repeated}
Let $F$ be a graph on $m$ vertices with $e(F)>0$, let
$\bm=(m_1,\ldots,m_r)$ be a vector of positive integers with
$\sum_{j=1}^r m_j=m$, and let
$\balpha=(\alpha_1,\ldots,\alpha_r)$ have positive entries
with $\sum_{j=1}^r\alpha_j\le1$. If $r\ge3$ and $0<p<1$,
then $\tP^p_{\bm}(F;\balpha)$ is a $p$-quasirandom property
if and only if
\[
e(F)>1\qquad\text{or}\qquad\balpha\ne\bm/m.
\]
\end{theorem}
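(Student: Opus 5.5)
We pass to graph limits and argue via graphons. A sequence satisfies $\tP^p_{\bm}(F;\balpha)$ if and only if every subsequential limit graphon $W$ satisfies, for all disjoint measurable $A_1,\ldots,A_r\subseteq[0,1]$ with $|A_j|=\alpha_j$, the identity
\[
\tilde t(F,W;A_1^{m_1},\ldots,A_r^{m_r})=p^{e(F)}\prod_{j=1}^r\alpha_j^{m_j},
\]
where $\tilde t$ averages over all labellings. Quasirandomness is equivalent to $W\equiv p$. So the theorem reduces to two claims. The first is that, under $e(F)>1$ or $\balpha\ne\bm/m$, these identities force $W\equiv p$. The second is that for $e(F)=1$ (so $F=K_2$ plus isolated vertices) and $\balpha=\bm/m$, some $W\not\equiv p$ satisfies them. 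The second claim needs an explicit construction, and the first needs the rigidity machinery.

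\emph{Forcing direction.} We perturb the sets. Fix a configuration and move small masses between classes, and between a class and the complement when $\sum\alpha_j<1$. Differentiating the identity in these variations gives linear constraints on $U=W-p$ and on the lower-order "rooted" densities. Concretely, expand $W=p+U$ and write the count as a multilinear polynomial in the class indicator functions. Because $r\ge3$, we may split one class into pieces and move mass among three classes while keeping all sizes fixed. This should let us isolate a symmetrized transversal count on a refined partition.

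Order by order, the expansion gives expressions of the form $\sum_{S}c_S\,t_S(U)$, where the coefficients $c_S$ are grouped elementary symmetric polynomials in the $\alpha_j$ with multiplicities $m_j$. The plan is to show that the lowest-order nonvanishing term is essentially $\int (\deg_U)^2$, up to a nonzero coefficient, which forces the degree function of $U$ to vanish. We then use the balanced-count theorem for graphs with $e(F)>1$, via its degree-sequence inequality and the additive-rigidity consequence, to conclude $U=0$. When $e(F)=1$ but $\balpha\ne\bm/m$, the relevant coefficient, roughly $\sum_j m_j(\alpha_j-m_j/m)$-type weighted differences, is nonzero, and the edge-density plus degree constraints force $U=0$ as in the Janson--S\'os unbalanced argument.

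\emph{Main obstacle.} The hardest step is the higher-order obstructions: showing that no coefficient polynomial vanishes identically on the admissible region in a way that would leave a nontrivial nullspace. Here I would invoke the non-divisibility theorem for grouped elementary symmetric polynomials. It shows that the polynomial governing the bad indices is not divisible by the factor that could cancel it, so the functional nullspace is trivial except at the single-edge balanced profile.

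\emph{Nonforcing direction.} Take $F$ to be one edge plus $m-2$ isolated vertices with $\balpha=\bm/m$. The symmetrized count equals a weighted sum of $\int_{A_i\times A_j}W$ over pairs of classes (including $i=j$ when $m_j\ge2$). The coefficients are proportional to $m_im_j$, or to $\binom{m_j}{2}$ on the diagonal, divided by $\binom m2$. The degenerate case is $m=2$, $r\le2$, which is excluded by $r\ge3$ except as noted; there the profile reduces to the $K_2$ half-half case. For general $\balpha=\bm/m$, put $\mu=\sum_j\mathbf 1_{A_j}m/m_j\cdot\alpha_j$, which is the indicator of $\bigcup A_j$. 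Then the count becomes $\int W\,\mu\otimes\mu$ minus diagonal corrections, and these corrections cancel exactly at $\alpha_j=m_j/m$. Hence any $W=p+c(f(x)+f(y))$ with $\int f=0$ and $f$ suitably small changes only terms depending on $\int_{\cup A_j}f$. I would check whether these terms vanish, or else adjust to $W=p+\varepsilon g(x)g(y)$ with $\int g=0$ so that the count changes by $\varepsilon(\int_{\cup A}g)^2\cdot$coefficient. Choosing the sign and the exact form to obtain an identity for all $A$ is the point to verify, and I expect the right choice to come from the explicit $\alpha_j=m_j/m$ cancellation.
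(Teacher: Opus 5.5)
\paragraph{The forcing direction has a genuine gap.} Expanding $W=p+U$ ``order by order'' is a local argument. Here $W$ is an arbitrary graphon satisfying the identities, so $U$ is not small. The identities are polynomials of degree $e(F)$ in $U$, so there is no lowest-order term to isolate: contributions of different degrees can cancel. Even if you obtained $\deg_U=0$, that is not the hypothesis of additive rigidity (Corollary~\ref{cor:additive}). That corollary needs the whole kernel $\tPsi_{F,W}$ to be additive.

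Splitting a repeated class into pieces also does not isolate a transversal count. The integral over $A_j^{m_j}$ then sums over all assignments of the $m_j$ coordinates to the pieces.

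The missing idea is an exact reduction that is linear in the kernel rather than in $W$. By Janson's characterization (Theorem~\ref{thm:nullspace}), the identities are equivalent to $\tPsi_{F,W}-p^{e(F)}\in\bigoplus_{k\in\cK}\HH_{m,k}$, where $\cK=\{k:L_\balpha\mid E_k\}$. Theorem~\ref{thm:app520-factor} excludes every $k\ge2$; this is where $r\ge3$ is actually used. Moreover $1\in\cK$ exactly when $\balpha=\bm/m$, since $E_1=\sum_j m_jz_j$. So the discrepancy is mean-zero additive, and for $e(F)>1$ Corollary~\ref{cor:additive} gives $W=p$ with no perturbation at all.

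Your claim that the nullspace is ``trivial except at the single-edge balanced profile'' conflates two things. The space $\cM_{\bm,\balpha}$ does not depend on $F$, and it equals $\HH_{m,1}$ at every balanced profile. What singles out $e(F)>1$ is that no graphon realizes a nonzero element of $\HH_{m,1}$ as $\tPsi_{F,W}-p^{e(F)}$.

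\paragraph{The single-edge unbalanced case.} For $e(F)=1$ with $\balpha\ne\bm/m$, edge-density and degree constraints only control Walsh levels $0$ and $1$ of $W-p$. The level-two part $W(x,y)-d_W(x)-d_W(y)+\int W$ is untouched, and excluding it needs $L_\balpha\nmid E_2$, again coming from $r\ge3$. Your sketch never uses $r\ge3$, so it would equally ``prove'' a false statement for $r=2$. With $\bm=(2,1)$ and $\balpha=(1/3,2/3)$, $E_2=z_1(z_1+2z_2)$ is divisible by $L_\balpha$, so $2\in\cK$. Hence this single-edge test is not forcing (Lemma~\ref{lem:single-edge}), even though the profile is unbalanced.

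\paragraph{The nonforcing direction.} Your first candidate is the right one and needs no adjustment. For $W=p+c(f(x)+f(y))$ with $\int_0^1 f=0$,
\[
\int_{A_1^{m_1}\times\cdots\times A_r^{m_r}}(\tPsi_{F,W}-p)
=\frac{2c}{m}\Bigl(\prod_{j=1}^r\alpha_j^{m_j}\Bigr)\sum_{j=1}^r\frac{m_j}{\alpha_j}\int_{A_j}f .
\]
This vanishes when $\alpha_j=m_j/m$, because the $A_j$ then partition $[0,1]$ up to a null set.

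Your fallback $W=p+\varepsilon g(x)g(y)$ fails. The diagonal terms contribute
\[
-\frac{\varepsilon m^2}{2}\binom{m}{2}^{-1}\Bigl(\prod_{j=1}^r\alpha_j^{m_j}\Bigr)\sum_{j=1}^r\frac{1}{m_j}\Bigl(\int_{A_j}g\Bigr)^2 ,
\]
not $\varepsilon(\int_{\cup A_j}g)^2$ times a coefficient. This is nonzero for suitable admissible families unless $g=0$, reflecting that level two is not bad when $r\ge3$.
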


Thus every graph with at least two edges is forcing for
every admissible profile with at least three classes.
For a graph consisting of one edge and any number of
isolated vertices, the condition fails to characterize
$p$-quasirandomness precisely when $\alpha_j=m_j/m$ for
every $j$. Taking $r=m$ and $m_1=\cdots=m_m=1$ recovers
the symmetrized assertion of Theorem~\ref{thm:balanced}
for $0<p<1$. The endpoint $p=1$ is treated separately
in Lemma~\ref{lem:p-one}, which applies to every graph
containing an edge and every admissible profile.

{
The second classification concerns labelled transversal
counts. In this setting, Hatami, Hatami and Li~\cite{HHL}
characterized the functions whose integrals vanish over
products of disjoint sets of prescribed measures. They
used this characterization to prove that labelled
transversal counts characterize quasirandomness for
graphs with twin vertices, and proposed the following
conjecture.
}

\begin{conjecture}[Hatami--Hatami--Li
{\cite[Conjecture~6.1]{HHL}}]\label{conj:HHL}
Let $F\ne K_2$ be a labelled graph on $m$ vertices with
$e(F)>0$, let $0<p\le1$, and let $\alpha_i>0$ satisfy
$\sum_{i=1}^m\alpha_i\le1$. Then
$\mathcal P^p(F;\balpha)$ is a $p$-quasirandom property.
\end{conjecture}

{
We establish the following classification, including the
classical case $F=K_2$.
}

\begin{theorem}\label{thm:transversal}
Let $F$ be a labelled graph on $m$ vertices with $e(F)>0$,
let $0<p<1$, and let $\alpha_i>0$ with
$\sum_{i=1}^m\alpha_i\le1$. Then
$\mathcal P^p(F;\balpha)$ is a $p$-quasirandom property
unless $F=K_2$ and $\alpha_1=\alpha_2=1/2$.
In that exceptional case, it is not a $p$-quasirandom property.
\end{theorem}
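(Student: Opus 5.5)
We would argue in the graph-limit language. A $p$-quasirandom sequence trivially satisfies $\mathcal P^p(F;\balpha)$, so only the converse needs work. Suppose $(G_n)$ satisfies the property but is not $p$-quasirandom. Passing to a subsequence, $G_n\to W$ for some graphon $W\not\equiv p$. The counting condition then becomes an identity
\[
\int_{A_1\times\cdots\times A_m}\prod_{ij\in E(F)}W(x_i,x_j)\,dx
\;=\;p^{e(F)}\prod_{i=1}^m\lambda(A_i)
\]
for all pairwise disjoint measurable $A_i\subseteq[0,1]$ with $\lambda(A_i)=\alpha_i$. Write $W=p+U$ and expand the product over subsets $S\subseteq E(F)$. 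The identity says that
\[
\Phi(x_1,\dots,x_m)=\sum_{\emptyset\ne S\subseteq E(F)}p^{e(F)-|S|}\prod_{ij\in S}U(x_i,x_j)
\]
has vanishing integral over every admissible product of disjoint sets. By the Hatami--Hatami--Li characterization of such functions, $\Phi$ lies in the span of the functions depending on at most $m-1$ coordinates, with coefficients determined by $\balpha$. The bad indices of this nullspace are exactly the ones the paper says it determines.

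The key steps, in order, are as follows.

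First, integrate out all coordinates except two, or use the structure of the nullspace, to extract a one-variable degree condition. Set $d(x)=\int W(x,y)\,dy$. Two consequences are immediate: the edge density $\int U$ vanishes in the non-exceptional cases, and $d$ satisfies an additive relation.

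Second, symmetrize over the automorphism-free relabellings. The point is that the labelled identity for $(F,\balpha)$ implies, after averaging over permutations of $\balpha$ compatible with $F$, the symmetrized transversal identity. This is the repeated-class setting of Theorem~\ref{thm:repeated} with $r=m\ge3$ and all $m_j=1$, whenever $m\ge3$.

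Third, for $m\ge3$, invoke Theorem~\ref{thm:repeated}. If $e(F)>1$, or if $\balpha\ne(1/m,\dots,1/m)$, the symmetrized profile is forcing, hence $W\equiv p$. This is a contradiction. Strictly speaking, averaging weakens the hypothesis, so the symmetrized property follows from the labelled one and forcing transfers; the step is still valid.

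The remaining case with $m\ge3$ is $F$ a single edge plus isolated vertices and $\balpha$ balanced with $\sum\alpha_i=1$. Here the labelled count is not symmetrized: it tests $\int_{A_1\times A_2}W$ for disjoint sets of measure $1/m$, with $m\ge3$. Since $1/m<1/2$, the sets $A_1,A_2$ leave room for other sets. Comparing $A_1\times A_2$ with $A_1\times A_2'$ for different choices of $A_2$ inside the complement of $A_1$ gives $\int_{A_1\times B}U=0$ for all $B$ of measure $1/m$ disjoint from $A_1$. By additivity this extends to all pairs of disjoint sets, so $U=0$ almost everywhere. The same argument covers $m=2$ with $F=K_2$ and $\alpha_1+\alpha_2<1$, or $\alpha_1\ne\alpha_2$: in either case one can swap parts of a set against the slack, or against the asymmetry, to make both degree and codegree constant.

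For the exceptional case $F=K_2$ with $\alpha_1=\alpha_2=1/2$, take the random bipartite-complement construction: $W=p+\varepsilon(f(x)+f(y))$ with $\int f=0$. Then for complementary halves $A,B$,
\[
\int_{A\times B}(f(x)+f(y))=\tfrac12\int_{A\cup B}f=0,
\]
so the counts hold while $W\ne p$. Realize $W$ by $W$-random graphs.

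The main obstacle is the single-edge case with $m=2$ and unequal or non-complementary proportions. There, additivity must be squeezed out of pairs of sets with constrained measures; I would first try exchanging small subsets between $A_1$, $A_2$ and the leftover region.
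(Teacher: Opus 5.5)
\textbf{There is a genuine gap in your Step 2, and it sits in the main case of the theorem.} Averaging the labelled identity over a permutation $\pi$ needs the permuted tuple $(A_{\pi(1)},\ldots,A_{\pi(m)})$ to be admissible, that is, $\alpha_{\pi(i)}=\alpha_i$ for every $i$. Averaging over automorphisms of $F$ changes nothing. So the labelled condition yields the symmetrized condition at the same $\balpha$ only when $\balpha$ is constant.

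For unbalanced complementary $\balpha$ the implication fails even at the level of nullspaces. Take $f=\sum_i\alpha_i h(x_i)$ with $\int_0^1h=0$. It integrates to $(\prod_j\alpha_j)\int_0^1h=0$ over every admissible product. But $\Sym f=\frac1m\sum_j h(x_j)$ integrates to $\frac1m(\prod_k\alpha_k)\sum_j\alpha_j^{-1}\int_{A_j}h$, which is generally nonzero when $\alpha_1\ne\alpha_2$. So you cannot invoke Theorem~\ref{thm:repeated} at the profile $\balpha$ when $e(F)>1$ and $\balpha$ is unbalanced. Your claim that ``averaging weakens the hypothesis'' is exactly where it breaks.

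\textbf{The missing idea is the alternation part of Lemma~\ref{lem:HHL}.} You misdescribe that lemma as ``span of functions of at most $m-1$ coordinates''. What it actually gives, when $\sum_i\alpha_i=1$, is that every Walsh component of $\Psi_{F,W}-p^{e(F)}$ of order at least $2$ is alternating. Alternating components are annihilated by $\Sym$, so $\tPsi_{F,W}-p^{e(F)}$ is a mean-zero additive function. Such a function integrates to zero over every \emph{equipartition} product. Hence $W$ satisfies the balanced symmetrized identities whatever $\balpha$ is, and Corollary~\ref{cor:additive} (Theorem~\ref{thm:balanced}) gives $W=p$. The right target is the balanced profile, not $\balpha$. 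When $\sum_i\alpha_i<1$, Lemma~\ref{lem:HHL} gives $\Psi_{F,W}=p^{e(F)}$ directly, so that case is salvageable, but again not by averaging.

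\textbf{The case $F=K_2$ with $\alpha_1\ne\alpha_2$ and $\alpha_1+\alpha_2=1$ is left unproved.} You flag it as the main obstacle and offer only a swapping heuristic. The paper cites the classical edge-count classification, but Lemma~\ref{lem:HHL} also settles it in a few lines:
\begin{itemize}
\item $W-p$ is symmetric, so its order-two component is both symmetric and alternating, hence zero.
\item Therefore $W-p=h(x)+h(y)$ with $\int_0^1h=0$.
\item The test then reads $(1-2\alpha_1)\int_A h=0$ for every $A$ with $\lambda(A)=\alpha_1$, which forces $h=0$ when $\alpha_1\ne1/2$.
\end{itemize}
If $\alpha_1+\alpha_2<1$, the lemma gives $W=p$ at once.

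\textbf{What is sound.} Your reduction of the single-edge case with $m\ge3$ to an edge test with proportions summing to less than one agrees with the paper's. Your balanced-case symmetrization is valid. Your exceptional-case construction is correct.

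\textbf{Smaller points.} Your Step 1 assertion that the edge density of $W$ equals $p$ ``immediately'' is unsupported. The two-class counterexamples in Section~\ref{sec:counterexamples} show that restricted counts need not fix the edge density. You never use this claim, so it should simply be dropped.
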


Together with Lemma~\ref{lem:p-one}, this proves
Conjecture~\ref{conj:HHL} for $0<p\le1$ and answers
\cite[Problem~2.16]{JS} affirmatively.
In particular, a graph consisting of one edge and at least
one isolated vertex is forcing for every labelled transversal
profile, although its balanced symmetrized condition fails
for $0<p<1$. Since the exceptional $K_2$ test admits a
two-type graphon counterexample, the classification also
answers \cite[Problems~2.15 and~2.17]{JS} affirmatively.

{
The common ingredient in these classifications is the resolution
of the balanced transversal-count conjecture of Janson and
S\'os~\cite{JS}, stated above. The theorem includes labelled counts
as well as their symmetrized counterparts.
}

\begin{theorem}\label{thm:balanced}
Let $F$ be a graph on $m$ vertices with $e(F)>1$, and let
$0<p\le1$. Then both
$\tP^p(F;1/m,\ldots,1/m)$ and
$\mathcal P^p(F;1/m,\ldots,1/m)$ are
$p$-quasirandom properties.
\end{theorem}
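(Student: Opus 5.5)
We plan to pass to graph limits and reduce the statement to a rigidity property of the balanced multilinear form. Let $(G_n)$ satisfy the symmetrized balanced condition; it suffices to treat this case, since the labelled condition implies the symmetrized one by averaging over labellings. By compactness, every subsequence has a further subsequence converging to a graphon $W$. Write $W=p+h$. The standard translation of uniformity over disjoint sets of measure $1/m$ (as in Janson--S\'os) then gives the following: for all pairwise disjoint measurable $A_1,\ldots,A_m\subseteq[0,1]$ with $|A_i|=1/m$,
\[
\sum_{\sigma}\int_{A_{\sigma(1)}\times\cdots\times A_{\sigma(m)}}\prod_{ij\in E(F)}W(x_i,x_j)\,dx=m!\,p^{e(F)}m^{-m}.
\]
We must show $h=0$ a.e., i.e.\ $W\equiv p$.

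\textbf{Step 1 (edge density and degrees).} We expand the product $\prod_{ij\in E(F)}(p+h)$ by subsets $S\subseteq E(F)$. We then test with partitions: take a partition $[0,1]=A_1\cup\cdots\cup A_m$ and perturb it by swapping small sets $B\subset A_i$, $B'\subset A_j$ of equal measure. Differentiating the constant count with respect to such swaps yields first-variation identities. These say that a certain function $\Phi(x)=\sum_v \Phi_v(x)$ is a.e.\ constant on $[0,1]$. Here $\Phi_v(x)$ is the integral of the count with the vertex $v$ pinned at $x$, averaged over the remaining classes.

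Averaging further over random balanced partitions gives an identity between the homomorphism density $t(F,W)$ and rooted densities. Applying the same process to lower-order terms, we aim to deduce that the degree function $d_W(x)=\int W(x,y)\,dy$ satisfies a polynomial identity of the form
\[
\sum_{v\in V(F)}\ \sum_{k}c_{v,k}\,\bigl(d_W(x)-p\bigr)^k=\text{const}.
\]

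\textbf{Step 2 (degree-sequence inequality).} The key step, and the one we expect to be the main obstacle, is to show that $e(F)>1$ forces $d_W\equiv p$ and $t(K_2,W)=p$. The plan is to compare the balanced count with a convexity bound. By H\"older/Sidorenko-type arguments along the star rooted at each vertex, and using $e(F)\ge2$ so that some vertex has degree $\ge2$ or two disjoint edges exist, the averaged count is at least a strictly convex function of the degree sequence $(d_W(x))$. Equality holds only when the degrees are constant and equal to $p$. For $e(F)=1$ this fails, since the count is linear in $h$; this matches the exceptional single-edge profiles. We would first try a Jensen argument applied to $\int d_W^{\deg v}$ combined with the variation identity from Step 1, to rule out cancellation among the different vertices.

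\textbf{Step 3 (additive rigidity).} Once $W$ is $p$-regular, every term linear in $h$ vanishes. The condition then becomes $\sum_{S\ne\emptyset}p^{e(F)-|S|}\,\cdot$ ($S$-count of $h$ over the balanced classes) $=0$ for all balanced partitions. We next choose a class configuration with $h$ restricted to $A_1\times A_2$ and the rest collapsed, together with an averaging argument over partitions. This isolates the quadratic term $\int h^2$ (or $t(C_4,h)$), which must then vanish. Concretely, for an edge $ij$ of $F$ we vary only $A_i,A_j$ within a fixed union $A_i\cup A_j$. The resulting function of the pair of sets is a two-class $K_2$-type count, and the Hatami--Hatami--Li characterization of functions with vanishing integrals over disjoint prescribed-measure products forces $h$ to be of the form $f(x)+f(y)$ locally. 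Regularity, that is $d_W\equiv p$, then kills $f$. Hence $h=0$, and $(G_n)$ is $p$-quasirandom. Since the argument was for an arbitrary subsequence, this covers both properties, and the case $p=1$ is included.
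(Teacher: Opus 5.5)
There is a genuine gap. Your proposal never supplies a working mechanism at the point where $e(F)>1$ has to be used, and several of the stated implications are false.

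In Step~1, rooted densities are not functions of the degree. For $F=P_3$ rooted at an endpoint, $\Phi_v(x)=\int W(x,y)\,d_W(y)\,dy$, so no polynomial identity in $d_W(x)-p$ can follow.

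Step~2 relies on a lower bound that fails. For $F=K_3$ and $p=\tfrac12$, the graphon $W(x,y)=\mathbf 1[(x-\tfrac12)(y-\tfrac12)<0]$ has $d_W\equiv\tfrac12$ but triangle density $0$. So the count cannot be bounded below by a function of the degrees with equality at constant degree $p$. For bipartite $F$, such a bound would be a Sidorenko-type statement that is not available.

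Step~3 fails as well, for two reasons. First, for $p$-regular $W=p+h$ (so $\int h=0$) and an equipartition $A_1,\ldots,A_m$, the part of the symmetrized count that is linear in $h$ is a nonzero multiple of $\sum_a\int_{A_a^2}h$. This need not vanish: take $h=\varepsilon\,\sigma(x)\sigma(y)$ with $\int\sigma=0$. Then $W$ is $p$-regular, yet $\sum_a\int_{A_a^2}h=\varepsilon\sum_a(\int_{A_a}\sigma)^2$. Second, varying $A_i,A_j$ inside their union tests a conditional kernel built from all edges at $i$ and $j$, not $h(x_i,x_j)$. So Hatami--Hatami--Li says nothing directly about $h$.

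What is missing is the reduction to a finite algebraic problem, together with an inequality on the degree sequence of $F$ (not the degree function of $W$).
\begin{itemize}
\item The balanced averages annihilate exactly $\HH_{m,1}$, so the condition says $\tPsi_{F,W}=\sum_i h(x_i)$.
\item Janson and S\'os (Lemma~\ref{lem:JS}) turn a counterexample into a two-type solution: $U,V,S\ge0$, not all equal, and $(c,b)\ne(0,0)$ with $\sum_{|T|=k}U^{e_F(T)}V^{e_F(T^c)}S^{e_F(T,T^c)}=\binom mk(c+bk)$.
\item The paper (Theorem~\ref{thm:algebra}) uses only $k\in\{0,1,m-1,m\}$. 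Assuming $U>V$, it shows $0<V<S<U$. With $x=(V/S)^e$ and $y=(U/S)^e$, the equations become $g(x)=y$ and $g(y)=x$, where $g(t)=\sum_i t^{1-d_i/e}-(m-1)t$ and the $d_i$ are the degrees of $F$.
\item Because $e(F)>1$ forces some exponent $1-d_i/e$ into $(0,1)$, the function $g(t)-(2-t)$ is strictly concave with strictly increasing second derivative. This excludes $0<x<1<y$ (Lemma~\ref{lem:scalar}).
\end{itemize}
With a single edge, all exponents lie in $\{0,1\}$ and such pairs exist, which matches the nonforcing single-edge case. Your reduction of the labelled property to the symmetrized one, by averaging over $\mathfrak S_m$, is the same as the paper's.
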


{
Theorem~\ref{thm:balanced} requires no connectedness or
regularity assumption and allows isolated vertices. Its proof completes
the algebraic reduction of Janson and S\'os by a degree-sequence
inequality. The resulting additive rigidity statement
(Corollary~\ref{cor:additive}) provides a common ingredient for both
classifications.
}

\begin{revision}
The arguments underlying Theorem~\ref{thm:repeated} also
settle two conjectures of Janson~\cite{J26}. The first,
\cite[Conjecture~5.20]{J26}, concerns the functional nullspace
of the restricted averages. We show that, with at least three
classes, this nullspace contains only mean-zero additive
functions: all higher Walsh levels are excluded by the
non-divisibility theorem for grouped elementary symmetric
polynomials (Theorem~\ref{thm:app520-factor}). The definitions
are given in Section~\ref{sec:preliminaries}.
The second, \cite[Conjecture~6.3]{J26}, asserts that profiles
whose nullspace consists exactly of the mean-zero additive
functions are forcing whenever $e(F)>1$.
Corollary~\ref{cor:additive-profile} proves the
more general assertion that, for $e(F)>1$ and $0<p\le1$,
forcing holds whenever every function in the nullspace is
mean-zero and additive, regardless of the number of classes.
\end{revision}

For two classes, however, higher Walsh levels may survive,
and the conclusion of Theorem~\ref{thm:repeated} need not
hold. This provides an obstruction to the following
conjecture of Janson, which extends the forcing assertion
to all repeated-class profiles with more than one class.

\begin{conjecture}[Janson
{\cite[Conjecture~6.7]{J26}}]\label{conj:J-general}
Let $F$ have at least two edges, let $0<p<1$, and let
$(\bm,\balpha)$ be an admissible profile with $r>1$.
Then $\tP^p_{\bm}(F;\balpha)$ is a $p$-quasirandom property.
\end{conjecture}

A particular case is the crossing-triangle question,
corresponding to $F=K_3$, $\bm=(2,1)$ and
$\balpha=(1/3,2/3)$, posed in \cite[Problem~9.5]{JS}
and restated in \cite[Problem~6.8]{J26}.
\rev{Here the counted triangles have exactly two vertices in the
class of proportion $1/3$ and one in the class of proportion $2/3$.}
Since the labelled and symmetrized triangle counts coincide,
this case concerns a single counting condition.
The following proposition gives a counterexample for every
$p\in(0,1)$.

\begin{proposition}\label{prop:crossing}
For every $0<p<1$, there is a graph sequence $(G_n)$
with $v_n=v(G_n)\to\infty$ such that
\[
N(K_3,G_n;U,U,V)=p^3|U|^2|V|+o(v_n^3)
\]
uniformly over all disjoint sets $U,V\subseteq V(G_n)$
with $|U|=\lfloor v_n/3\rfloor$ and
$|V|=\lfloor2v_n/3\rfloor$, while
\[
\lim_{n\to\infty}\frac{2e(G_n)}{v_n^2}<p.
\]
In particular, $(G_n)$ is not $p$-quasirandom.
\end{proposition}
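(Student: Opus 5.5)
The plan is to build a graphon $W$ whose crossing-triangle counts are exactly right for every admissible pair $(U,V)$ but whose edge density is below $p$, and then to sample graphs from it.

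\emph{Reduction to a graphon.} The class proportions $(1/3,2/3)$ are complementary, so up to rounding $V=V(G_n)\setminus U$. It therefore suffices to find a graphon $W$ on $[0,1]$ with two properties:
\[
\int_{U\times U\times U^c} W(x,y)W(y,z)W(x,z)\,dx\,dy\,dz=p^3\cdot\tfrac19\cdot\tfrac23
\]
for \emph{every} measurable $U$ of measure $1/3$, and $\int W<p$. We then let $G_n=\mathbb G(n,W)$, or a deterministic quasirandom blow-up of a step graphon. By the counting lemma in its restricted form, which bounds the difference of set-restricted homomorphism densities by a constant times the cut distance uniformly over all choices of sets, together with $\delta_\square(G_n,W)\to0$, the required count identity holds with $o(v_n^3)$ error uniformly in $U,V$. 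The integer rounding of $|U|,|V|$ only changes counts by $O(v_n^2)$. The limit of $2e(G_n)/v_n^2$ is $\int W<p$, so $(G_n)$ is not $p$-quasirandom.

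\emph{Step graphon ansatz.} I would take $W$ to be a step function with $k$ parts of weights $w_1,\dots,w_k$ and a symmetric matrix $(a_{ij})$. A set $U$ of measure $1/3$ is then described only by its traces $u_i\in[0,w_i]$ with $\sum u_i=1/3$, and $V$ has traces $w_i-u_i$. The crossing count becomes the cubic polynomial
\[
T(u)=\sum_{i,j,l}a_{ij}a_{jl}a_{il}\,u_iu_j(w_l-u_l).
\]
Forcing $T$ to be constant, equal to $2p^3/27$, on the hyperplane $\sum u_i=1/3$ is a finite system of polynomial equations in the $w_i,a_{ij}$. To find a non-constant solution, I would linearise at $a_{ij}\equiv p$. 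Write $a=p+\varepsilon h$ and expand $T$ in Walsh levels of $h$, namely the constant, additive and genuinely two-variable parts. The introduction indicates that for two classes higher levels survive in the nullspace, so I would pick $h$ in the level-two part of the nullspace. Concretely, the first-order term
\[
\int_{U\times U\times U^c}\bigl(h(x,y)+h(y,z)+h(x,z)\bigr)
\]
must be constant in $U$. At the proportions $(1/3,2/3)$ the coefficients of $\int_{U\times U}h$, $\int_{U\times U^c}h$ and $\int_{U^c\times U^c}h$ combine so that a suitable level-two $h$ with zero row sums cancels. I would verify this cancellation directly for a two- or three-type $h$.

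\emph{Second-order correction, the main obstacle.} The quadratic and cubic terms in $\varepsilon$ will in general depend on $U$. I would therefore look for an exact solution with $a=q+\varepsilon h+\varepsilon^2 g$, where $q<p$ is a constant chosen so that the $U$-independent part of the count is exactly $2p^3/27$. I expect the quadratic contribution of $h$ to the constant part to be positive, for instance through a term of the form $\int h^2$, which is why a downward shift of the constant is needed and why the edge density drops below $p$. The hard part is showing that the $U$-dependent part of the higher-order terms can be absorbed, exactly and not just to first order, by further nullspace and additive corrections. With few types, I would first try solving the resulting polynomial system explicitly, for example with three equal-weight parts and $h$ a rank-one level-two function such as $h=\phi\otimes\phi$ with $\int\phi=0$. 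I would then check that the solution branch exists for every $p\in(0,1)$ with all $a_{ij}\in[0,1]$, adjusting $\varepsilon$ small depending on $p$.
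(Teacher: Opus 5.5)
\textbf{There is a genuine gap: the proposal never constructs the graphon, and that construction is the whole content of the proposition.}

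Your reduction is sound and matches the paper. It suffices to find a nonconstant graphon $W$ with $\int_{A^2\times A^c}\Psi_{K_3,W}=p^3\lambda(A)^2\lambda(A^c)$ for every $A$ with $\lambda(A)=1/3$, and with $\int W<p$. One then approximates $W$ in cut distance by finite graphs and applies the transfer lemma; rounding costs only $O(v_n^2)$. Your first-order observation is also correct: any $h$ with zero marginals cancels at proportions $(1/3,2/3)$.

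The step you call the main obstacle, controlling the $U$-dependence of the quadratic and cubic terms exactly, is left as a plan ("I would try", "I expect"). You give no argument that the corrections $\varepsilon^2 g,\dots$ terminate, or that the resulting polynomial system has a solution with entries in $[0,1]$ for every $p$.

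Your concrete ansatz (three equal-weight parts, $h=\phi\otimes\phi$, $\int\phi=0$) runs straight into that obstacle. Take $W=q+\varepsilon\phi(x)\phi(y)$ and $\lambda(A)=1/3$, and write $z=\int_A\phi$, $s=\int_A\phi^2$, $t=\int_{A^c}\phi^2$. Then the quadratic term integrates to $q\varepsilon^2(t-2s)z^2$ and the cubic term to $\varepsilon^3 s^2t$. Both depend on $A$ unless $\phi^2$ is constant. A mean-zero step function on three equal parts cannot have constant nonzero $\phi^2$, since a sum of three values $\pm c$ is never zero.

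The missing idea is to make $\phi^2$ constant by using two equal types. Take $\sigma=\mathbf 1_{[0,1/2)}-\mathbf 1_{[1/2,1]}$ and $W=a+b\sigma(x)\sigma(y)$. Because $\sigma^2\equiv1$, the expansion closes with no correction needed:
\[
\Psi_{K_3,W}=a^3+b^3+ab(a+b)Q_\sigma,\qquad Q_\sigma=\textstyle\sum_{i<j}\sigma(x_i)\sigma(x_j).
\]
Fubini gives $\int_{A^2\times A^c}Q_\sigma=(1-3\lambda(A))\bigl(\int_A\sigma\bigr)^2$, which vanishes when $\lambda(A)=1/3$. So the quadratic term is itself in the nullspace. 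The only constant shift comes from the cubic term $b^3$, not from an $\int h^2$-type second-order term as you anticipated.

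Choose $b=\tfrac14\min\{p,1-p\}$ and $a=(p^3-b^3)^{1/3}$. Then $0<a-b\le W\le a+b<1$ and the edge density is $a<p$. With this choice your outline becomes a complete proof, and it is exactly the paper's construction.
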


Proposition~\ref{prop:crossing} answers the crossing-triangle
question negatively and disproves
Conjecture~\ref{conj:J-general}. The construction realizes
a nonzero discrepancy in $\HH_{3,2}$ whose integral vanishes
over every prescribed product of classes. At
$p=4^{-1/3}$, the sequence $K_{3t}\sqcup K_{3t}$ provides
a direct finite example. These counterexamples have limiting
edge density different from $p$ and therefore do not settle
the variant in which edge density $p$ is also required.
The general classification for two complementary classes
remains open.

{
\noindent\textit{Outline of the proofs.} The distinction between the classification results and the
two-class counterexamples arises from the different functional
nullspaces of the restricted-count conditions. The graphon transfer
principle reduces forcing to the constancy of a subgraph kernel.
For balanced counts, the Janson--S\'os algebraic reduction yields
a triple $(U,V,S)$ satisfying a system of equations. The four equations
indexed by $0,1,m-1,m$, together with a scalar inequality for the
degree sequence of $F$, force this triple to be constant. This proves
Theorem~\ref{thm:balanced} and, through the balanced-count condition,
the additive rigidity needed in the two classifications.
}

{
For symmetrized repeated-class counts, Janson's nullspace
characterization expresses the discrepancy in terms of bad Walsh
levels. When $r\ge3$, the grouped elementary symmetric polynomials
\rev{have no linear divisor whose coefficients are all positive
at levels $k\ge2$.} Thus only the additive level survives, and additive rigidity
settles the case $e(F)>1$; the single-edge case determines the exception.
For labelled transversal counts, the characterization of Hatami,
Hatami and Li gives direct vanishing for noncomplementary proportions;
for complementary proportions, higher Walsh components vanish under
symmetrization. This again gives an additive reduction. With two
repeated classes, a second-order component may survive. The crossing-triangle
counterexamples realize this obstruction by a two-type graphon.
}

{
The paper is organized as follows. Section~\ref{sec:preliminaries}
introduces the restricted counts, graphons and the two functional
characterizations. Section~\ref{sec:balanced} proves the balanced-count
theorem and additive rigidity. Section~\ref{sec:repeated} establishes
polynomial non-divisibility, classifies the bad indices and proves the
symmetrized repeated-class classification. Section~\ref{sec:transversal}
settles the labelled transversal case. Finally,
Section~\ref{sec:counterexamples} constructs the two-class
counterexamples and states the remaining classification problem.
}

\section{Preliminaries}\label{sec:preliminaries}

\begin{revision}
We fix the counting conventions and class profiles, then
record the graphon transfer principle and the functional
characterizations used below.
\end{revision}

\subsection{Graphs and restricted subgraph counts}

All graphs considered in this paper are finite, undirected and simple. 
For a graph $G=(V(G),E(G))$, put $v(G)=|V(G)|$ and $e(G)=|E(G)|$.
Let $[m]=\{1,\ldots,m\}$. Throughout, $F$ is a fixed graph
with vertex set $[m]$ and $e=e(F)>0$. For subsets $U_1,\ldots,U_m\subseteq V(G)$, let
$N(F,G;U_1,\ldots,U_m)$ denote the number of injective maps
$\phi:[m]\to V(G)$ such that $\phi(i)\in U_i$ for every
$i\in[m]$ and $\{\phi(i),\phi(j)\}\in E(G)$ whenever
$\{i,j\}\in E(F)$. Thus $N$ counts labelled copies of $F$
with prescribed vertex memberships; these copies need not
be induced. To average over the assignments of the labels to the
prescribed sets, let $\mathfrak S_m$ be the group of all
permutations of $[m]$ and define
\[
\tN(F,G;U_1,\ldots,U_m)
=\frac{1}{m!}\sum_{\pi\in\mathfrak S_m}
N(F,G;U_{\pi(1)},\ldots,U_{\pi(m)}).
\]
The sets $U_i$ need not be disjoint or distinct.
For a positive integer $a$, the notation $U^a$ in an
argument list denotes $a$ consecutive occurrences of $U$.
In particular, if $m_1,\ldots,m_r$ are positive integers
with $\sum_{j=1}^r m_j=m$, then
$N(F,G;U_1^{m_1},\ldots,U_r^{m_r})$ denotes the count
obtained by repeating each $U_j$ exactly $m_j$ times.
The same convention applies to $\tN$.

Let $(G_n)$ be a graph sequence with $v_n=v(G_n)\to\infty$.
For $U\subseteq V(G_n)$, let $G_n[U]$ denote the subgraph
induced by $U$. Given $p\in[0,1]$, we call $(G_n)$
\emph{$p$-quasirandom} if
$e(G_n[U])=p|U|^2/2+o(v_n^2)$ uniformly over
$U\subseteq V(G_n)$. Fix $1\le r\le m$, a vector $\bm=(m_1,\ldots,m_r)$ of
positive integers with $\sum_{j=1}^r m_j=m$, and a vector
$\balpha=(\alpha_1,\ldots,\alpha_r)$ of positive reals with
$\sum_{j=1}^r\alpha_j\le1$. We call $(\bm,\balpha)$ an
\emph{admissible profile}, where $\bm$ and $\balpha$ specify
the class multiplicities and proportions, respectively.
All these parameters are independent of $n$. The sequence
$(G_n)$ satisfies the \emph{labelled restricted-count property}
$\mathcal P^p_{\bm}(F;\balpha)$ if
\[
N(F,G_n;U_1^{m_1},\ldots,U_r^{m_r})
=p^e\prod_{j=1}^r|U_j|^{m_j}+o(v_n^m)
\]
uniformly over all pairwise disjoint sets
$U_1,\ldots,U_r\subseteq V(G_n)$ with
$|U_j|=\lfloor\alpha_jv_n\rfloor$ for every $j\in[r]$.
Replacing $N$ by $\tN$ defines the \emph{symmetrized
restricted-count property} $\tP^p_{\bm}(F;\balpha)$.
Uniformity means that the absolute error is at most
$\varepsilon_n v_n^m$ for some $\varepsilon_n\to0$
independent of the permitted vertex sets. When $r=m$, necessarily $m_1=\cdots=m_m=1$; we then omit the subscript $\bm$ and call the counts \emph{transversal}.
A profile is \emph{balanced} if $\balpha=\bm/m$, that is,
$\alpha_j=m_j/m$ for every $j\in[r]$. A count property is
a \emph{$p$-quasirandom property}, or \emph{forcing} for
fixed $p$, if it holds precisely for the $p$-quasirandom
graph sequences.

For $T\subseteq V(F)$, write $T^c=V(F)\setminus T$,
put $e_F(T)=e(F[T])$, and let $e_F(T,T^c)$ denote the
number of edges with one endpoint in each part.
For $i\in[m]$, write $d_i=d_F(i)$ for its degree in $F$.
A factor with exponent zero is interpreted as $1$, including
when its base is zero. For a nonnegative integer $a$,
the notation $u^{[a]}$ denotes a list of $a$ copies of $u$,
with $u^{[0]}$ denoting the empty list.

\subsection{Graphons and the transfer principle}

A \emph{graphon} is a symmetric measurable function
$W:[0,1]^2\to[0,1]$. Write $\lambda$ for Lebesgue measure
on $[0,1]$ and its product spaces. For a graph $F$ on $[m]$,
define the labelled and symmetrized subgraph kernels by
\[
\Psi_{F,W}(x_1,\ldots,x_m)
=\prod_{ij\in E(F)}W(x_i,x_j),
\qquad
\tPsi_{F,W}=\Sym\Psi_{F,W},
\]
where, for a measurable function $f$ on $[0,1]^m$,
\[
\Sym f(x_1,\ldots,x_m)
=(m!)^{-1}\sum_{\pi\in\mathfrak S_m}
f(x_{\pi(1)},\ldots,x_{\pi(m)})\,.
\]
Equalities between measurable functions are understood
almost everywhere.

For a real integrable kernel $K$ on $[0,1]^2$, define
its \emph{cut norm} by
\[
\|K\|_\square
=\sup_{B,C\subseteq[0,1]}
\left|\int_{B\times C}K(x,y)\,dx\,dy\right|,
\]
where the supremum is over all measurable sets $B,C$.
The \emph{cut distance} between graphons $U,V$ is
$\delta_\square(U,V)
=\inf_\varphi\|U-V^\varphi\|_\square$, where
$V^\varphi(x,y)=V(\varphi(x),\varphi(y))$ and the infimum
is over measure-preserving bijections $\varphi$ of $[0,1]$,
understood modulo null sets.

To associate a graphon with a finite graph $G$ on
$N\ge1$ vertices, enumerate its vertices as
$v_1,\ldots,v_N$ and partition $[0,1]$ into measurable
cells $I_1,\ldots,I_N$, each of measure $1/N$.
Set $W_G(x,y)=1$ on $I_i\times I_j$ if
$v_iv_j\in E(G)$, and set $W_G(x,y)=0$ otherwise.
The choice of vertex ordering and cells does not affect
its cut-distance equivalence class. A graph sequence
$(G_n)$ with $v(G_n)\to\infty$ is $p$-quasirandom if
and only if $\delta_\square(W_{G_n},p)\to0$, where $p$
also denotes the constant graphon; see~\cite{LS,J11}.
This characterization allows us to translate restricted
counting conditions into integral identities for graphons.
We first record the standard counting property of
quasirandom graph sequences.

\begin{lemma}[Uniform restricted counting
{\cite[Lemma~2.4]{JS}}]\label{lem:uniform-counting}
Let $(G_n)$ be a $p$-quasirandom graph sequence with
$v_n=v(G_n)\to\infty$, where $p\in[0,1]$, and let $F$
be a fixed graph on $[m]$ with $e=e(F)>0$. Then
\[
N(F,G_n;U_1,\ldots,U_m)
=p^e\prod_{i=1}^m|U_i|+o(v_n^m)
\]
uniformly over all subsets
$U_1,\ldots,U_m\subseteq V(G_n)$.
The same conclusion holds with $\tN$ in place of $N$.
\end{lemma}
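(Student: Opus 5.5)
We prove Lemma~\ref{lem:uniform-counting} by a standard telescoping (counting lemma) argument in the cut norm. First we pass to graphons. Let $W_n=W_{G_n}$ with cells $I_v$ of measure $1/v_n$. For sets $U_1,\ldots,U_m\subseteq V(G_n)$, put $A_i=\bigcup_{v\in U_i}I_v$. The homomorphism-type integral
\[
t_n(U_1,\ldots,U_m)=v_n^m\int_{[0,1]^m}\prod_{i=1}^m\mathbf 1_{A_i}(x_i)\prod_{ij\in E(F)}W_n(x_i,x_j)\,dx
\]
counts all maps $\phi:[m]\to V(G_n)$ with $\phi(i)\in U_i$ that preserve edges, injective or not. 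The non-injective maps number at most $\binom m2 v_n^{m-1}=o(v_n^m)$. Hence $N(F,G_n;U_1,\ldots,U_m)=t_n+o(v_n^m)$ uniformly in the sets.

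Next we replace $W_n$ by $p$ one edge at a time. Since $(G_n)$ is $p$-quasirandom, $\delta_\square(W_n,p)=\|W_n-p\|_\square\to0$, because the constant graphon is invariant under measure-preserving relabelling. One can also see this directly from the definition via $e(G_n[U])$, using the standard fact that the cut norm of a symmetric kernel is controlled up to a constant factor by its values on symmetric sets $B\times B$.

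Enumerate the edges of $F$ as $f_1,\ldots,f_e$. Write the difference $\prod_{k}W_n(f_k)-p^e$ as the telescoping sum
\[
\sum_{k=1}^e \Big(\prod_{l<k}p\Big)\big(W_n-p\big)(x_{a_k},x_{b_k})\prod_{l>k}W_n(f_l).
\]
Fix $k$ and all variables other than $x_{a_k},x_{b_k}$. The remaining integrand then factors as $g(x_{a_k})\,h(x_{b_k})$, where $g,h$ take values in $[0,1]$. The factor $g$ absorbs the indicator $\mathbf 1_{A_{a_k}}$ and the edges at $a_k$ other than $f_k$; the factor $h$ does the same at $b_k$. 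The edge $f_k$ is the only factor involving both variables, since $F$ is simple.

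The standard bound $\left|\int (W_n-p)(x,y)g(x)h(y)\right|\le 4\|W_n-p\|_\square$ holds for $[0,1]$-valued $g,h$. In fact the factor is $1$, since the bilinear form is extremal at indicator functions. Integrating over the other variables, each term is at most $4\|W_n-p\|_\square$. Hence
\[
\Big|t_n-v_n^m p^e\int\prod_i\mathbf 1_{A_i}\Big|\le 4e\,\|W_n-p\|_\square\,v_n^m,
\]
and $v_n^m\int\prod_i\mathbf 1_{A_i}=\prod_i|U_i|$.

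The bound does not depend on the sets $U_i$, so the error is $\varepsilon_nv_n^m$ with $\varepsilon_n=4e\|W_n-p\|_\square+O(1/v_n)\to0$ uniformly. The statement for $\tN$ follows by averaging the uniform estimates over the $m!$ permutations, since the main term $p^e\prod|U_i|$ is permutation invariant.

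The only step needing care is the passage from the definition of $p$-quasirandomness, which tests induced edge counts on single sets $U$, to the cut-norm bound over pairs of sets $B\times C$. We handle it by the polarization identity: $e(B,C)$ can be expressed through $e(G_n[B\cup C])$, $e(G_n[B\setminus C])$ and similar quantities. Alternatively we can simply invoke the cited equivalence $\delta_\square(W_{G_n},p)\to0$ from~\cite{LS,J11}. Fractional cells cause no issue, because the $A_i$ are unions of whole cells.
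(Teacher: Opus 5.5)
Your argument is correct and is essentially the one the paper relies on: the paper cites \cite[Lemma~2.4]{JS} and, in the remark after Lemma~\ref{lem:transfer}, sketches the same ingredients. These are the telescoping over the edges of $F$ with two $[0,1]$-valued weights (giving $e(F)\|U-V\|_\square$, i.e.\ your factor $1$), the $O(N^{m-1})$ correction for noninjective maps, and the equivalence of $p$-quasirandomness with $\delta_\square(W_{G_n},p)\to0$ from \cite{LS,J11}. One remark is misplaced: the fractional-cell issue concerns the cut-norm supremum rather than the $A_i$. This is harmless, since you may invoke the cited equivalence directly, or note that the bilinear form $\int_{B\times C}(W_n-p)$ is extremal at unions of whole cells.
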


The sets in Lemma~\ref{lem:uniform-counting} need not be
disjoint or distinct. Thus the lemma applies to repeated
classes and shows that every $p$-quasirandom sequence
satisfies each of the restricted-count conditions considered
here. To prove that such a condition is forcing, it therefore
suffices to establish the converse implication. The following
transfer principle expresses this implication in terms of
graphon integral identities.

\begin{lemma}[Restricted-count transfer
{\cite[Lemma~3.3]{J26}}]\label{lem:transfer}
Let $F$ be a graph on $[m]$ with $e=e(F)>0$, let
$(\bm,\balpha)$ be an admissible profile, and let $0<p<1$.
Then $\mathcal P^p_{\bm}(F;\balpha)$ is a
$p$-quasirandom property if and only if every graphon $W$
satisfying
\[
\int_{A_1^{m_1}\times\cdots\times A_r^{m_r}}
\Psi_{F,W}(\mathbf{x})\,d\mathbf{x}
=p^e\prod_{j=1}^r\alpha_j^{m_j}
\]
for all pairwise disjoint measurable sets
$A_1,\ldots,A_r\subseteq[0,1]$ with
$\lambda(A_j)=\alpha_j$ is equal to $p$ almost everywhere.
Here $\mathbf{x}=(x_1,\ldots,x_m)$ and
$d\mathbf{x}=dx_1\cdots dx_m$.
The same equivalence holds for
$\tP^p_{\bm}(F;\balpha)$ with $\tPsi_{F,W}$ in place
of $\Psi_{F,W}$.

More precisely, suppose that $v(G_n)\to\infty$ and
$\delta_\square(W_{G_n},W)\to0$. Then $(G_n)$ satisfies
$\mathcal P^p_{\bm}(F;\balpha)$ if and only if $W$
satisfies the displayed integral identities. The analogous
statement holds for symmetrized counts and kernels.
\end{lemma}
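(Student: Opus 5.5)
The plan is a compactness argument built on the "more precisely" part. Of the three steps below, the first proves that part and contains all the technical work; the second and third deduce the equivalence from it.

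\emph{Step 1: counting lemma for restricted sets.} For a graphon $W$, measurable $A_1,\ldots,A_r$ and $\mathbf A=A_1^{m_1}\times\cdots\times A_r^{m_r}$, write $t_{\mathbf A}(W)=\int_{\mathbf A}\Psi_{F,W}$. I will first check that
\[
|t_{\mathbf A}(W)-t_{\mathbf A}(W')|\le e\,\|W-W'\|_\square
\]
uniformly in the sets. This follows by the usual telescoping over the edges of $F$: in each term, one edge carries $W-W'$, and every other factor, including the indicators $\mathbf 1_{A_j}(x_i)$, is either a $[0,1]$-valued function of a single variable or does not involve both endpoints of that edge. The cut norm tested against products $f(x)g(y)$ with $0\le f,g\le1$ equals $\|\cdot\|_\square$ up to a constant, which is harmless. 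The same bound holds for $\tPsi_{F,W}$, since $\Sym$ averages kernels of the same form.

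Next, for a graph $G$ on $N$ vertices and sets $U_j$, let $A_j^G=\bigcup_{v_i\in U_j}I_i$. Then
\[
N(F,G;U_1^{m_1},\ldots,U_r^{m_r})=N^m\,t_{\mathbf A^G}(W_G)+O(N^{m-1}),
\]
because non-injective maps contribute only $O(N^{m-1})$.

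\emph{Step 2: the ``more precisely'' part.} Suppose $\delta_\square(W_{G_n},W)\to0$, and choose measure-preserving bijections $\varphi_n$ with $\|W_{G_n}-W^{\varphi_n}\|_\square\to0$.

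For ($\Leftarrow$), assume $W$ satisfies the identities and let $U_j\subseteq V(G_n)$ be disjoint with $|U_j|=\lfloor\alpha_jv_n\rfloor$. Transport the sets $A_j^{G_n}$ by $\varphi_n$, then enlarge them disjointly by total measure $O(1/v_n)$ so that they have measure exactly $\alpha_j$. This is possible because $\sum\alpha_j\le1$, and it changes $t$ by $O(1/v_n)$. Steps 1 and the identities for $W$ then give the required count uniformly.

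For ($\Rightarrow$), fix disjoint $A_j$ with $\lambda(A_j)=\alpha_j$, and pull them back by $\varphi_n$. The resulting sets are not unions of cells, but they define fractional assignments $w_{ij}=v_n\lambda(\varphi_n^{-1}(A_j)\cap I_i)\in[0,1]$ with $\sum_jw_{ij}\le1$ and column sums $\alpha_jv_n$. For a step graphon, $t$ is a polynomial in the $w_{ij}$ that is multilinear in each vertex's row, apart from diagonal terms of total weight $O(1/v_n)$.

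I expect this rounding to be the main obstacle. I would round to a genuine partition into sets of sizes $\lfloor\alpha_jv_n\rfloor$ by randomized rounding, choosing each vertex's class independently with probabilities $w_{ij}$. The count has expectation equal to the fractional value up to $O(v_n^{m-1})$. It is concentrated by a bounded-differences inequality: changing one vertex's class moves the count by $O(v_n^{m-1})$, so the deviation is $O(v_n^{m-1/2})$ with high probability. The class sizes deviate by $O(\sqrt{v_n\log v_n})$, and fixing them costs $o(v_n^m)$. The hypothesis $\mathcal P^p_{\bm}(F;\balpha)$ applied to the rounded sets, together with Step 1, then gives $t_{\mathbf A}(W)=p^e\prod_j\alpha_j^{m_j}+o(1)$, and letting $n\to\infty$ yields the identity.

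\emph{Step 3: the equivalence.} Suppose first that only $W\equiv p$ satisfies the identities, and let $(G_n)$ satisfy the property. By compactness of the graphon space, every subsequence has a further subsequence converging in cut distance to some $W$. By Step 2, $W$ satisfies the identities, so $W=p$. Hence $\delta_\square(W_{G_n},p)\to0$, and $(G_n)$ is $p$-quasirandom. Conversely, every $p$-quasirandom sequence has the property by Lemma~\ref{lem:uniform-counting}.

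Now suppose some $W\ne p$ on a set of positive measure satisfies the identities. Take $W$-random graphs $G_n$, which converge to $W$ almost surely. By Step 2 they have the property. However, $\delta_\square(W,p)>0$, since cut distance zero to a constant forces equality almost everywhere, so $(G_n)$ is not $p$-quasirandom. The symmetrized version is identical, with $\tPsi_{F,W}$ in place of $\Psi_{F,W}$.
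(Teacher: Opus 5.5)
Your proposal is correct. Its skeleton matches what the paper records for this lemma:
\begin{itemize}
\item the telescoping estimate $|t_{\mathbf A}(W)-t_{\mathbf A}(W')|\le e\|W-W'\|_\square$ for arbitrary, possibly repeated, sets (for $[0,1]$-valued weights the bilinear test gives exactly $\|\cdot\|_\square$, so no constant is lost);
\item the $O(N^{m-1})$ correction for non-injective maps;
\item the $O(1/v_n)$ adjustment of the classes to exact sizes in the graphon-to-graph direction;
\item the deduction of the equivalence from the ``more precisely'' part via compactness and $W$-random graphs.
\end{itemize}

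The genuine difference is the graph-to-graphon direction. The paper does not prove it and defers to the partition-approximation argument of the cited Janson--S\'os transfer lemmas. You instead pull the sets back to fractional memberships $w_{ij}$ and round them independently at random. The key observation is that the step-graphon integral is, up to $O(1/v_n)$, the expected injective count. You then control the count by bounded differences and the class sizes by a Chernoff bound. Finally you trim to classes of exact size $\lfloor\alpha_jv_n\rfloor$, which is possible because $\sum_j\lfloor\alpha_jv_n\rfloor\le v_n$, at cost $o(v_n^m)$.

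This makes the lemma self-contained. It also handles exact class sizes, disjointness and repeated classes in one argument, with no appeal to an external approximation lemma.
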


\begin{revision}
For transversal counts, the corresponding transfer results
are given in \cite[Lemmas~3.1 and~3.4]{JS}.
The stronger formulation in Lemma~\ref{lem:transfer} uses
the same partition-approximation argument as these results.
The relevant uniform estimate is
\[
\left|
\int_{B_1\times\cdots\times B_m}
\bigl(\Psi_{F,U}-\Psi_{F,V}\bigr)
\right|
\le e(F)\|U-V\|_\square,
\]
valid for graphons $U,V$ and arbitrary measurable sets
$B_1,\ldots,B_m$, including repeated sets. To see this,
telescope over the edges of $F$ and fix the remaining
coordinates; each summand is controlled by the cut norm
with two weights in $[0,1]$. The same bound holds after
symmetrization. For a graph on $N$ vertices, excluding
noninjective maps changes the count by $O(N^{m-1})$,
uniformly in the vertex classes. Changing each class by
$O(1)$ vertices has the same order of effect. These estimates,
together with the standard partition-approximation argument
in the cited transfer results, give the uniform errors and
integer rounding in the statement.

Thus the forcing problem reduces to showing that the
prescribed integral identities imply $W=p$ almost everywhere.
A nonconstant graphon satisfying the identities yields
a graph sequence with the restricted counts that is not
$p$-quasirandom.
\end{revision}

A graphon has \emph{two types} if there is a measurable
partition $[0,1]=B_1\cup B_2$, with
$\lambda(B_1),\lambda(B_2)>0$, such that $W$ is constant
almost everywhere on each $B_i\times B_j$, for
$i,j\in\{1,2\}$. The counterexamples constructed below
have this form. Before introducing the functional
characterizations used to prove forcing, we treat the
endpoint $p=1$, which is excluded from the standing
assumptions of~\cite{J26}.

\begin{lemma}\label{lem:p-one}
Let $F$ be a simple graph on $[m]$ with $e(F)>0$, and let
$(\bm,\balpha)$ be an admissible profile. Every graph
sequence satisfying $\mathcal P^1_{\bm}(F;\balpha)$
or $\tP^1_{\bm}(F;\balpha)$ is $1$-quasirandom.
\end{lemma}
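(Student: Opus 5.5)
We prove Lemma~\ref{lem:p-one} directly from the definitions, without invoking Lemma~\ref{lem:transfer}, whose standing assumption is $0<p<1$. The idea is that when $p=1$ the restricted counts are saturated. The count $N(F,G_n;U_1^{m_1},\ldots,U_r^{m_r})$ equals the number of injective admissible maps minus the number of maps that miss some edge of $F$. So the hypothesis says that the number of \emph{bad} maps is $o(v_n^m)$, where a map $\phi$ with $\phi(i)\in U_{c(i)}$ is bad if some edge $ij\in E(F)$ is sent to a non-edge. We must show that $G_n$ has $o(v_n^2)$ non-edges in total. This gives $1$-quasirandomness, since then $e(G_n[U])=|U|^2/2+o(v_n^2)$ uniformly.

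\textbf{Step 1: reduce to the labelled property.} For the symmetrized property, $\tN$ is an average of $m!$ labelled counts. Each of these is at most the number of injective maps into the permuted class assignment, and every such number is $\prod_j|U_j|^{m_j}+o(v_n^m)$. An average of quantities each bounded above by this value can equal it up to $o(v_n^m)$ only if each term does. Hence the identity permutation already gives $N(F,G_n;U_1^{m_1},\ldots,U_r^{m_r})=\prod_j|U_j|^{m_j}+o(v_n^m)$. From here on we work only with the labelled property.

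\textbf{Step 2: count non-edges between two classes.} Fix an edge $ab\in E(F)$, with $a$ in class $j$ and $b$ in class $k$ (possibly $j=k$). Every pair $x\in U_j$, $y\in U_k$ with $x\ne y$ and $xy\notin E(G_n)$ extends to at least $\prod_l|U_l|^{m_l}/(|U_j||U_k|)-O(v_n^{m-3})$ bad maps. Since $|U_l|=\Theta(v_n)$, this is $\Theta(v_n^{m-2})$. Therefore the number of non-edges between $U_j$ and $U_k$ (or inside $U_j$ when $j=k$) is $o(v_n^2)$, uniformly over all admissible disjoint families $(U_1,\ldots,U_r)$.

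\textbf{Step 3: cover all pairs (main obstacle).} The classes do not cover $V(G_n)$ when $\sum\alpha_j<1$, and when $j\ne k$ we only control pairs across two disjoint classes. We average over random admissible families. Choose $(U_1,\ldots,U_r)$ uniformly at random among disjoint families of the prescribed sizes. A fixed pair $\{x,y\}$ of distinct vertices lies in $U_j\times U_k$ (or inside $U_j$ when $j=k$) with probability bounded below by a constant $c(\balpha)>0$, for all large $n$. Summing over the non-edges, the expected number of non-edges captured is at least $c\cdot(\#\text{non-edges of }G_n)$. By Step~2 this expectation is $o(v_n^2)$, because the bound there is uniform over families. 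Hence $G_n$ has $o(v_n^2)$ non-edges, and $e(G_n[U])\ge\binom{|U|}{2}-o(v_n^2)=|U|^2/2+o(v_n^2)$ uniformly over $U$. This is exactly $1$-quasirandomness.

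The only point requiring care is the uniformity in Step~2 and the constant lower bound on the probability in Step~3. Both follow because the class sizes are linear in $v_n$ and independent of the pair considered.
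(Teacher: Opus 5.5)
Your proof is correct. It uses the same two ingredients as the paper: saturation at $p=1$, and averaging over a uniformly random admissible family of disjoint classes. However, it applies them in the opposite order.

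\textbf{The paper's route.} The paper averages the counting identity itself. For a uniformly random family, the labelled count and every summand of $\tN$ have the same expectation $q_N C_n$, where $C_n=N(F,G_n;V(G_n)^m)$ and $q_N=\prod_j(k_j)_{m_j}/(N)_m$. The hypothesis therefore gives $C_n/(N)_m=1+o(1)$ directly. Fixing one edge of $F$ then yields the single inequality $C_n\le 2e(G_n)(N-2)_{m-2}$, which shows that only $o(N^2)$ edges are missing.

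\textbf{Your route.} You localize before averaging.
\begin{itemize}
\item Step~1 bounds each of the $m!$ summands by $\prod_j(|U_j|)_{m_j}$ and so reduces the symmetrized property to the labelled one, at the cost of a harmless factor $m!$ in the error. This reduction exists only because $p=1$ saturates every summand.
\item Step~2 converts the $o(v_n^m)$ bad maps into $o(v_n^2)$ non-edges inside the specific class pair met by an edge $ab$ of $F$, using the $\Theta(v_n^{m-2})$ extension count.
\item Step~3 averages over random families, using only the pair-inclusion probabilities, which are approximately $\alpha_j\alpha_k$ or $\alpha_j^2$.
\end{itemize}

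\textbf{Comparison.} The paper's argument is shorter and treats the labelled and symmetrized properties in one stroke, since all summands have equal expectation. Your argument yields a slightly stronger intermediate statement: the number of non-edges in every admissible class pair is uniformly $o(v_n^2)$. Its averaging step also concerns only pairs of vertices rather than images of $m$-tuples.
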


\begin{proof}
Let $(G_n)$ satisfy either condition, put $N=v(G_n)$,
and set $k_j=\lfloor\alpha_jN\rfloor$ for $j\in[r]$.
Since every $\alpha_j>0$ and the multiplicities are fixed,
we may assume that $k_j\ge m_j$ for every $j$.
Moreover, $\sum_jk_j\le N$.

Choose a uniformly random ordering of $V(G_n)$ and form
$U_1,\ldots,U_r$ from successive blocks of sizes
$k_1,\ldots,k_r$. This gives a uniformly distributed
ordered family of disjoint classes of the prescribed sizes.
For any fixed injection $\phi:[m]\to V(G_n)$, its images
occupy uniformly distributed distinct positions in the
ordering. The probability that they have the prescribed
class memberships is therefore
$q_N=\prod_{j=1}^r(k_j)_{m_j}/(N)_m$, where
$(a)_b=a(a-1)\cdots(a-b+1)$ and $(a)_0=1$.

Write $C_n=N(F,G_n;V(G_n)^m)$. By linearity of expectation,
the expected labelled restricted count is $q_NC_n$.
Each summand in the symmetrized count also assigns exactly
$m_j$ vertices to $U_j$ for every $j$, so its expectation
is likewise $q_NC_n$. Averaging the assumed counting
identity thus gives, under either hypothesis,
\[
\frac{\prod_{j=1}^r(k_j)_{m_j}}{(N)_m}\,C_n
=\prod_{j=1}^r k_j^{m_j}+o(N^m).
\]
The error remains $o(N^m)$ because it is uniform over all
admissible class families. Since $k_j=\alpha_jN+O(1)$
and every $\alpha_j>0$, we have
$\prod_j(k_j)_{m_j}=\Theta(N^m)$ and
$\prod_jk_j^{m_j}/\prod_j(k_j)_{m_j}\to1$.
Consequently,
\[
\frac{C_n}{(N)_m}
=\frac{\prod_jk_j^{m_j}}{\prod_j(k_j)_{m_j}}
+\frac{o(N^m)}{\prod_j(k_j)_{m_j}}
=1+o(1).
\]

Fix an edge $xy\in E(F)$. Every copy counted by $C_n$
maps $(x,y)$ to an ordered adjacent pair in $G_n$,
giving $2e(G_n)$ choices. Once these images are fixed,
there are at most $(N-2)_{m-2}$ injective assignments
of the remaining vertices. Hence
$C_n\le2e(G_n)(N-2)_{m-2}$, and therefore
\[
1-o(1)
\le\frac{C_n}{(N)_m}
\le\frac{2e(G_n)}{N(N-1)}
\le1.
\]
It follows that the number of missing edges,
$M_n=\binom N2-e(G_n)$, is $o(N^2)$.

For every $U\subseteq V(G_n)$, the number of missing
edges within $U$ is at most $M_n$. Thus
$\bigl|e(G_n[U])-|U|^2/2\bigr|
\le M_n+N/2=o(N^2)$ uniformly over all $U$.
This proves that $(G_n)$ is $1$-quasirandom.
\end{proof}

\subsection{Walsh decompositions and restricted averages}

For $S=\{i_1<\cdots<i_k\}\subseteq[m]$, write
$x_S=(x_{i_1},\ldots,x_{i_k})$. Every real-valued
$f\in L^2([0,1]^m)$ has a unique orthogonal
\emph{generalized Walsh decomposition}
$f=\sum_{S\subseteq[m]}f_S$, where $f_\varnothing=\int f$
is constant and, for nonempty $S$, the component $f_S$
depends only on $x_S$ and has integral zero in each
coordinate in $S$. Explicitly,
\[
f_S=\sum_{T\subseteq S}(-1)^{|S|-|T|}P_Tf,
\qquad
P_Tf(x_T)
=\int_{[0,1]^{[m]\setminus T}}
f(x)\,dx_{[m]\setminus T},
\]
where $P_Tf$ is regarded as a function on $[0,1]^m$
independent of the coordinates outside $T$; see
\cite[Section~2]{HHL}. A function of $k$ variables is
\emph{alternating} if interchanging any two coordinates
changes its sign.

For symmetric functions, we collect the Walsh components
of the same order. For $1\le k\le m$, let $\HH_{m,k}$
be the subspace of $L^2([0,1]^m;\RR)$ consisting of
functions of the form
\[
\sum_{\substack{S\subseteq[m]\\|S|=k}}g(x_S),
\]
where $g\in L^2([0,1]^k;\RR)$ is symmetric and has
integral zero in each coordinate. Let $\HH_{m,0}$ be
the space of constants. These spaces give the orthogonal
decomposition of the symmetric subspace by Walsh order.
In particular, the first level is the space of mean-zero
additive functions:
\[
\HH_{m,1}
=\left\{\sum_{i=1}^m h(x_i):
h\in L^2([0,1];\RR),\ \int_0^1h=0\right\}.
\]

Given an admissible profile $(\bm,\balpha)$, let
$\cM_{\bm,\balpha}$ be the space of symmetric real-valued
functions in $L^2([0,1]^m)$ whose integrals vanish over
every product $A_1^{m_1}\times\cdots\times A_r^{m_r}$,
where $A_1,\ldots,A_r\subseteq[0,1]$ are pairwise
disjoint measurable sets with $\lambda(A_j)=\alpha_j$.
Janson's characterization describes this nullspace as an
orthogonal sum of Walsh levels and determines the levels
by a polynomial divisibility criterion.

To state this criterion, let
$e_k(y_1,\ldots,y_m)
=\sum_{1\le i_1<\cdots<i_k\le m}y_{i_1}\cdots y_{i_k}$
be the elementary symmetric polynomial of degree $k$,
with $e_0=1$, and define
\begin{equation}\label{eq:grouped-polynomial}
E_k(z_1,\ldots,z_r)
=e_k(\underbrace{z_1,\ldots,z_1}_{m_1},\ldots,
     \underbrace{z_r,\ldots,z_r}_{m_r}),
\qquad 0\le k\le m.
\end{equation}
Put $L_{\balpha}(z)=\sum_{j=1}^r\alpha_jz_j$, with
divisibility understood in $\RR[z_1,\ldots,z_r]$.
If $\sum_j\alpha_j=1$, define the set of
\emph{bad indices} by
$\cK=\{k\in[m]:L_{\balpha}\mid E_k\}$;
if $\sum_j\alpha_j<1$, set $\cK=\varnothing$.

\begin{theorem}[Janson
{\cite[Theorems~4.4 and~5.5, (5.3)]{J26}}]
\label{thm:nullspace}
For every admissible profile $(\bm,\balpha)$,
\[
\cM_{\bm,\balpha}
=\bigoplus_{k\in\cK}\HH_{m,k},
\]
where the sum is orthogonal and the empty sum denotes
$\{0\}$.
\end{theorem}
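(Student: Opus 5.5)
The plan is to reduce the problem to one Walsh level at a time and then to a polynomial identity. Write a symmetric $f\in L^2([0,1]^m)$ as $f=\sum_{k=0}^m f_k$ with $f_k\in\HH_{m,k}$, say $f_k=\sum_{|S|=k}g_k(x_S)$. Here $g_k$ is symmetric with zero integral in each coordinate.

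\emph{Step 1: the integral as a polynomial in signed measures.} Fix disjoint $A_1,\ldots,A_r$ with $\lambda(A_j)=\alpha_j$, and put $\nu_j=\mathbf 1_{A_j}-\alpha_j$, viewed as a signed density of total mass $0$. Consider a subset $S$ with $k_j$ of its elements among the $m_j$ coordinates assigned to $A_j$. Integrating $g_k(x_S)$ over $A_1^{m_1}\times\cdots\times A_r^{m_r}$ gives $\prod_j\alpha_j^{m_j-k_j}$ times $\langle g_k,\mathbf 1_{A_1}^{\otimes k_1}\otimes\cdots\otimes\mathbf 1_{A_r}^{\otimes k_r}\rangle$. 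Because $g_k$ has zero marginals, each $\mathbf 1_{A_j}$ in the second factor may be replaced by $\nu_j$. Summing over $S$ and counting $\prod_j\binom{m_j}{k_j}$ subsets of each type, we get
\[
\int_{A_1^{m_1}\times\cdots\times A_r^{m_r}} f_k
=\Bigl\langle g_k,\ \bigl[\text{degree-$k$ part of }\textstyle\prod_j(\alpha_j+\nu_j)^{\otimes m_j}\bigr]\Bigr\rangle .
\]
Formally, this is the polynomial $E_k$ of \eqref{eq:grouped-polynomial}, up to the weights $\alpha_j^{m_j-k_j}$, evaluated at $z_j\mapsto\nu_j$ and paired with $g_k$. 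Thus $\Phi_k(\nu):=\int f_k$ is a homogeneous form of degree $k$ in $(\nu_1,\ldots,\nu_r)$.

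\emph{Step 2: separating the levels.} Next I would show that $f\in\cM_{\bm,\balpha}$ forces $\sum_k\Phi_k(\nu)=0$ to hold level by level. Starting from a fixed admissible family, swap small sets of measure $t\delta$ between classes. This replaces $\nu$ by an affine family in $t$ whose direction ranges over a spanning set of admissible perturbations: differences $\mathbf 1_B-\mathbf 1_C$ between classes, and, when $\sum\alpha_j<1$, exchanges with the complement. Polynomial identities in $t$, combined with the homogeneity of the $\Phi_k$, then give $\Phi_k\equiv0$ on the admissible cone for each $k$. Hence $\cM_{\bm,\balpha}=\bigoplus_k(\cM_{\bm,\balpha}\cap\HH_{m,k})$, and it remains to decide for which $k$ the whole of $\HH_{m,k}$ lies in the nullspace.

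\emph{Step 3: the divisibility criterion.} The constraints on $(\nu_j)$ are that each $\nu_j$ has mass $0$, together with pointwise constraints from disjointness. When $\sum_j\alpha_j=1$, the sets tile $[0,1]$, so $\sum_j\mathbf 1_{A_j}=1$, that is, pointwise the relevant vectors lie on an affine hyperplane whose linear part is governed by $L_{\balpha}$. When $\sum_j\alpha_j<1$ no such relation holds. The idea is to localize $g_k$: taking $g_k$ to be a symmetrized product of small mean-zero bumps shows that $\Phi_k$ vanishes for all $g_k$ exactly when the grouped polynomial $E_k(z)$ vanishes on the set of admissible pointwise values. For $\sum\alpha_j<1$ this set is Zariski dense, so $\cK=\varnothing$. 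For $\sum\alpha_j=1$ it is dense in the hyperplane $\{L_{\balpha}=0\}$ after the affine shift by $\alpha$. Since $L_{\balpha}$ is irreducible, vanishing on that hyperplane is equivalent to $L_{\balpha}\mid E_k$. The converse direction, that divisibility implies $\HH_{m,k}\subseteq\cM_{\bm,\balpha}$, follows from the same identity, because the factor $L_{\balpha}$ evaluates to zero.

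\emph{Main obstacle.} I expect Step 3 to be the hard part. The difficulty is identifying the correct pointwise variety, including how the $\alpha$-shift interacts with the linear form, and proving that signed measures arising from genuine disjoint sets are rich enough to detect every monomial of $E_k$. My first attempt would be to take the $A_j$ to be unions of many tiny intervals with prescribed local densities. Then $\nu$ approximates an arbitrary bounded density vector subject to the constraints, and passing to the limit yields the polynomial identity.
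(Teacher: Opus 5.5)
The paper gives no proof of this statement; it is imported from Janson, so your proposal has to stand on its own. The main gap is the reduction at the end of Step~2. Once $\cM_{\bm,\balpha}=\bigoplus_k(\cM_{\bm,\balpha}\cap\HH_{m,k})$ is known, it does not remain merely to decide for which $k$ the whole level $\HH_{m,k}$ is annihilated. The theorem also asserts that $\cM_{\bm,\balpha}\cap\HH_{m,k}=\{0\}$ for $k\notin\cK$: every nonzero symmetric zero-marginal $g_k$ must be detected by some admissible family. Your Step~3 localizes $g_k$ to symmetrized products of bumps. That shows only that \emph{some} element of $\HH_{m,k}$ has a nonzero integral when $k\notin\cK$. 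It does not exclude a nonzero proper subspace of $\HH_{m,k}$ lying in the nullspace. This all-or-nothing dichotomy is the substantive half of the theorem. It needs an argument that fixes an arbitrary $g_k$ and varies the partition instead.

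A second, smaller problem is Step~2 itself. Swapping genuine sets of measure $t\delta$ does not make $\nu$ affine in $t$, since indicators cannot be scaled. Both issues are repaired by the fine-comb device you mention only at the end, applied to the partition rather than to $g_k$.

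\emph{Relaxation.} Extend the identities to fractional partitions $\rho_j\colon[0,1]\to[0,1]$ with $\sum_j\rho_j\le1$ and $\int\rho_j=\alpha_j$. Fine combs with exact measures converge weak-$*$ to $\rho$, and the integral against $f\in L^2$ passes to the limit. Put $\nu=\rho-\balpha$, and let $H=\RR^r$ if $\sum_j\alpha_j<1$ and $H=\{w:\sum_jw_j=0\}$ otherwise. Then every mean-zero $H$-valued $\nu$ of small sup norm is admissible, because $\balpha$ lies in the relative interior of the set of admissible values of $\rho(y)$. Scaling $\nu\mapsto s\nu$ separates the levels, and homogeneity removes the smallness.

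\emph{Dichotomy.} For an arbitrary $g_k$ one has $\Phi_k(\nu)=\prod_j\alpha_j^{m_j}\int g_k(y)\,\widetilde P(\nu(y_1),\ldots,\nu(y_k))\,dy$. Here $P(w)=E_k(w_1/\alpha_1,\ldots,w_r/\alpha_r)$ and $\widetilde P$ is its polarization. The relevant change of variables is this linear rescaling, not an affine shift, because the zero marginals have already absorbed the centring. Polarizing in $\nu$ and inserting $\nu^{(i)}=h_i\epsilon_i$ yields $\widetilde P(\epsilon_1,\ldots,\epsilon_k)\int g_k\prod_ih_i(y_i)\,dy=0$ for all bounded mean-zero $h_i$ and all $\epsilon_i\in H$. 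If $P|_H\not\equiv0$, some coefficient is nonzero. Such products are dense in the zero-marginal functions, so $g_k=0$. If $P|_H\equiv0$, the same formula gives $\HH_{m,k}\subseteq\cM_{\bm,\balpha}$. Finally, $P|_H\equiv0$ holds exactly when $\sum_j\alpha_j=1$ and $L_{\balpha}\mid E_k$.
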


Thus the bad indices are precisely the Walsh orders
annihilated by all the prescribed restricted averages.
The index zero is never bad, since a nonzero constant
has nonzero integral over every admissible product.
Applying Theorem~\ref{thm:nullspace} to the \rev{centred}
symmetrized subgraph kernel shows that the graphon
integral identities in Lemma~\ref{lem:transfer} are
equivalent to
\begin{equation}\label{eq:kernel-test}
\tPsi_{F,W}-p^{e(F)}
\in\bigoplus_{k\in\cK}\HH_{m,k}.
\end{equation}
Together with Lemma~\ref{lem:transfer}, this reduces
the forcing problem to determining whether
\eqref{eq:kernel-test} implies $W=p$ almost everywhere.
In particular, if $\cK\subseteq\{1\}$, the only possible
discrepancy from the constant kernel is additive.

For labelled counts, the subgraph kernel need not be
symmetric, so Theorem~\ref{thm:nullspace} does not apply
directly. We instead use the following consequence of
the functional characterization of Hatami, Hatami and Li.

\begin{lemma}[Hatami--Hatami--Li
{\rev{\cite[Theorem~3.1 and Corollary~3.4]{HHL}}}]
\label{lem:HHL}
Let $\alpha_1,\ldots,\alpha_m>0$ satisfy
$\sum_{i=1}^m\alpha_i\le1$, and let
$f\in L^2([0,1]^m;\RR)$ satisfy
$\int_{A_1\times\cdots\times A_m}f=0$ for every
pairwise disjoint family of measurable sets
$A_1,\ldots,A_m\subseteq[0,1]$ with
$\lambda(A_i)=\alpha_i$.
If $\sum_i\alpha_i<1$, then $f=0$ almost everywhere.
If $\sum_i\alpha_i=1$, the generalized Walsh
decomposition $f=\sum_{S\subseteq[m]}f_S$ satisfies
$f_\varnothing=0$, and every component $f_S$ with
$|S|\ge2$ is alternating in the coordinates indexed by $S$.
\end{lemma}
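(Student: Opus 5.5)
The proof I propose treats the vanishing condition as a multilinear identity in the indicator functions $g_i=\mathbf 1_{A_i}$ and extracts local information by moving small pieces of mass between the sets. Write $\Phi(g_1,\ldots,g_m)=\int f(x)\prod_i g_i(x_i)\,dx$; this is multilinear and continuous in $L^2$. Fix an admissible disjoint family and a small measurable set $B\subseteq A_i$. Let $C$ be either disjoint from $\bigcup_j A_j$ or contained in some $A_k$ with $k\ne i$, with $\lambda(C)=\lambda(B)$. Replacing $A_i$ by $(A_i\setminus B)\cup C$, and in the second case also $A_k$ by $(A_k\setminus C)\cup B$, gives another admissible family. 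Expanding $\Phi$ multilinearly, the first-order terms in $(B,C)$ must cancel up to $O(\lambda(B)^2)$. Hence the marginals
\[
h_i(y)=\int f(x)\prod_{j\ne i}\mathbf 1_{A_j}(x_j)\,\Big|_{x_i=y}\,dx_{[m]\setminus\{i\}}
\]
are constant a.e. on the free region $R=[0,1]\setminus\bigcup_jA_j$ and on $A_i$. The swap moves additionally give bilinear relations between $h_i$ on $A_k$ and $h_k$ on $A_i$.

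\emph{Non-complementary case.} Here $\lambda(R)>0$, and I would run an induction on the number of coordinates already ``freed''. Moving mass between $A_i$ and $R$ shows that $\int f\,\prod_{j\ne i}\mathbf 1_{A_j}(x_j)\,(\mathbf 1_C-\mathbf 1_B)(x_i)=0$ for all $B\subseteq A_i$ and $C\subseteq R$ of equal measure. Because the whole family can be chosen freely, the roles of $A_i$ and $R$ can be interchanged and rearranged. Iterating over all coordinates, $f$ is then orthogonal to every product $\prod_i(\mathbf 1_{C_i}-\mathbf 1_{B_i})(x_i)$ with pairwise disjoint small sets. Together with the original identity, this shows that $f$ is orthogonal to all products $\prod_i\mathbf 1_{D_i}(x_i)$ with $D_1,\ldots,D_m$ pairwise disjoint and of arbitrary small measure. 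Such products span a dense subspace of $L^2([0,1]^m)$, since the diagonals are null. Hence $f=0$.

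\emph{Complementary case.} Here only swaps are available, so I would instead average. Integrating the identity over a random partition into sets of measures $\alpha_i$, realised for example by choosing each point's class independently with probabilities $\alpha_i$ and taking a limit, gives $f_\varnothing\prod\alpha_i=0$. For $|S|\ge2$, I would apply the first-order swap relation between $A_i$ and $A_j$ with $i,j\in S$, project onto Walsh level $S$ using the mean-zero property in each coordinate, and vary the remaining sets. This should yield $\int (f_S-f_S\circ\tau_{ij})\,\prod_{l\in S}u_l(x_l)=0$ for $\tau_{ij}$ the transposition of $x_i,x_j$ and all mean-zero test functions $u_l$, which says that $f_S$ is alternating in $(x_i,x_j)$.

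The main obstacle is this last step. One must check that the constraint $\sum\alpha_i=1$ leaves exactly the antisymmetric directions free. In particular, the level-one components are unconstrained, which matches the statement's silence about $|S|=1$. Disentangling the contributions of the different Walsh levels in the swap relation is also delicate, because each test function lives on the sets $A_j$ rather than being a free mean-zero function. I would first handle $m=2$ explicitly and then induct on $m$, fixing one set at a time.
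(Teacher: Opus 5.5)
The paper does not prove this lemma; it quotes it from Hatami, Hatami and Li. Your argument therefore has to stand on its own, and in the complementary case it does not.

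\textbf{The complementary case.} The alternation of $f_S$ is the real content of the lemma, and it is left at ``this should yield''. The first-order swap between $A_i$ and $A_j$ gives only that $h_i-h_j$ is a.e.\ constant on $A_i\cup A_j$. But $h_i$ collects every $f_T$ with $T\ni i$, tested against the functions $\mathbf 1_{A_l}-\alpha_l$ determined by the same family. All these Walsh levels enter together, and nothing in the proposal separates them. The induction on $m$ does not obviously help. After fixing $A_m$ the ambient space is $A_m^c$, and the inductive conclusion concerns Walsh components for the normalized measure on $A_m^c$. Those are recentred on $A_m^c$ and mix $f_T$ with $m\in T$ and $m\notin T$.

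Two further points are wrong as stated. First, your target identity $\int(f_S-f_S\circ\tau_{ij})\prod_{l\in S}u_l=0$ for all mean-zero $u_l$ says that $f_S$ is \emph{symmetric} in $(x_i,x_j)$; alternation means $f_S+f_S\circ\tau_{ij}=0$. Second, level one is not unconstrained. For $m=2$ and $f=g(x_1)$ with $\int g=0$, one gets $\int_{A\times A^c}f=\alpha_2\int_A g$, which is not identically zero. In general one needs $f_{\{i\}}=\alpha_i h$ for a single mean-zero $h$ (the case $k=1$ below). The lemma is silent about $|S|=1$ only because it records necessary conditions.

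\textbf{A way to close the gap.} The missing idea is a homogeneity argument that separates Walsh orders. Let $f_N$ be the average of $f$ over the boxes of the dyadic grid with $N$ cells $I_s$. Split each $I_s$ into pieces of measures $t_{s,i}/N$ with $\sum_i t_{s,i}=1$ and $\sum_s t_{s,i}=N\alpha_i$, and let $A_i$ be the union of the $i$th pieces. Then $\int_{A_1\times\cdots\times A_m}f_N=N^{-m}\sum_s f_N(s)\prod_i t_{s_i,i}$ exactly. So this degree-$m$ polynomial is bounded by $\|f-f_N\|_1$ on a polytope whose relative interior contains $t_{s,i}=\alpha_i$.

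Substitute $t=\alpha+\theta\chi$ with $\sum_i\chi_{s,i}=0=\sum_s\chi_{s,i}$, $|\chi_{s,i}|\le\min_l\alpha_l$ and $|\theta|\le1$. Bound the coefficients of the resulting polynomial in $\theta$, and let $N\to\infty$ along refinements of a fixed grid carrying $\chi$. Then homogeneity and density give, for every $k$,
\[
\sum_{|S|=k}\Bigl(\prod_{i\notin S}\alpha_i\Bigr)\int f_S\prod_{i\in S}\chi_i(x_i)\,dx=0
\]
for all mean-zero $\chi_1,\ldots,\chi_m\in L^2$ with $\sum_i\chi_i=0$.

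The case $k=0$ is $f_\varnothing=0$. For $|S|\ge2$ and $j\in S$, put $\chi_l=0$ off $S$, $\chi_l=u_l$ for $l\in S\setminus\{j\}$, and $\chi_j=-\sum_{l\in S\setminus\{j\}}u_l$. Only $f_S$ survives. Separating degrees in each $u_l$ shows that the form vanishes when one function occupies slots $l$ and $j$. Polarization then gives $f_S=-f_S\circ\tau_{lj}$. When $\sum_i\alpha_i<1$, the leftover mass acts as an extra class, the constraint $\sum_i\chi_i=0$ disappears, and the same identity gives $f=0$.

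\textbf{The non-complementary case.} Your argument here is repairable but not correct as written. Orthogonality to the products $\prod_i(\mathbf 1_{C_i}-\mathbf 1_{B_i})$ only sees $f_{[m]}$, since any function independent of one coordinate is orthogonal to them. The step ``together with the original identity'' is where the lower levels must be handled, and it does no work as stated. What you need is that the constant value of $h_i$ on $A_i\cup R$ is $0$, because $\int_{A_i}h_i=0$. Then slot $i$ may carry any indicator supported off the other $A_l$. Freeing the slots one at a time with small sets, so that a free region of positive measure persists, gives $f\perp\prod_i\mathbf 1_{D_i}$ directly.
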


We shall use only the necessary conditions in Lemma~\ref{lem:HHL}.
These imply that full symmetrization leaves at most a mean-zero additive term.
Indeed, for $|S|\ge2$, let $\tau$ interchange two
coordinates in $S$. Alternation gives
$f_S\circ\tau=-f_S$, while
$\Sym(f_S\circ\tau)=\Sym f_S$ by the definition of
symmetrization. Hence $\Sym f_S=0$. For a singleton
component, write $f_{\{i\}}(x)=h_i(x_i)$, where
$\int_0^1h_i=0$. Then
$\Sym f_{\{i\}}=m^{-1}\sum_{j=1}^m h_i(x_j)$.
Consequently, when $\sum_i\alpha_i=1$,
\[
\Sym f(x_1,\ldots,x_m)=\sum_{j=1}^m h(x_j),
\qquad
h=\frac1m\sum_{i=1}^m h_i,
\qquad
\int_0^1h=0.
\]
Thus the labelled counting condition also reduces, after
symmetrization, to an additive discrepancy. This allows
the labelled classification to use the same additive
rigidity statement as the symmetrized classification.

\section{Balanced counts and additive rigidity}\label{sec:balanced}

\begin{revision}
We prove Theorem~\ref{thm:balanced} by completing the
algebraic reduction of Janson and S\'os. Their reduction
associates a nonconstant triple $(U,V,S)$ satisfying a
finite algebraic system with any counterexample to the
balanced-count conjecture. The four equations indexed by
$0,1,m-1,m$ suffice to force $U=V=S$. The key step is the
following scalar inequality for exponents determined by
the degree sequence of $F$.
\end{revision}

\begin{lemma}\label{lem:scalar}
Let $m\ge3$, and let $a_1,\ldots,a_m\in[0,1]$ satisfy
$\sum_{i=1}^m a_i=m-2$, with at least one $a_i\in(0,1)$.
Set $g(t)=\sum_{i=1}^m t^{a_i}-(m-1)t$ for $t>0$.
Then no pair $0<x<1<y$ satisfies $g(x)=y$ and $g(y)=x$.
\end{lemma}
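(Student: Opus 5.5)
The plan is to exploit concavity of $g$ and its first-order behaviour at $t=1$. Then we rewrite the swap conditions $g(x)=y$, $g(y)=x$ in multiplicative form and squeeze them between an upper and a lower bound that depend on the $a_i$ only through $\sum_i a_i=m-2$.

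\textbf{Step 1: basic facts about $g$.}
Since each $a_i\in[0,1]$, every map $t\mapsto t^{a_i}$ is concave on $(0,\infty)$, and it is strictly concave when $a_i\in(0,1)$. As at least one such $a_i$ exists, $g$ is strictly concave. We also have
\[
g(1)=m-(m-1)=1, \qquad g'(1)=\sum_i a_i-(m-1)=-1 .
\]
Hence the tangent line at $1$ is $t\mapsto 2-t$, and strict concavity gives $g(t)<2-t$ for all $t\ne1$.

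Suppose $0<x<1<y$ with $g(x)=y$ and $g(y)=x$. Then the chord of $g$ over $[x,y]$ is the line $t\mapsto x+y-t$, whose slope $-1$ equals $g'(1)$. Evaluating at $t=1$ and using concavity, or directly the tangent bound, gives
\[
x+y<2 .
\]
This is the first constraint; by itself it is not contradictory.

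\textbf{Step 2: multiplicative form and two-sided bounds.}
Put $b_i=1-a_i\in[0,1]$, so that $\sum_i b_i=2$. Then
\[
g(t)=t\bigl(1+\psi(t)\bigr), \qquad \psi(t)=\sum_i\bigl(t^{-b_i}-1\bigr).
\]
The system becomes
\[
1+\psi(x)=y/x>1, \qquad 1+\psi(y)=x/y<1 .
\]
For fixed $t$, the map $b\mapsto t^{-b}$ is convex on $[0,1]$. This yields two bounds, both using only $\sum_i b_i=2$:
\[
-2\ln t\;\le\;\psi(t)\;\le\;2\bigl(t^{-1}-1\bigr).
\]
The lower bound is the tangent at $b=0$. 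The upper bound is the chord between $b=0$ and $b=1$, and it is strict because some $b_i\in(0,1)$; it recovers Step 1.

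\textbf{Step 3: closing the argument.}
Write $y=e^{L}$ and $x=e^{-K}$ with $K,L>0$. The two equations give
\[
e^{K+L}\ge 1+2K, \qquad e^{-K-L}\ge 1-2L ,
\]
alongside $y<2-x$ and the strict upper bounds from Step 2. I would aim to show these are incompatible. A cleaner route may be to eliminate one variable. Since $\psi$ is strictly convex and decreasing, for each $x<1$ the value $y=x(1+\psi(x))$ is determined, and one then checks the sign of $y(1+\psi(y))-x$.

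Where the two-sided bounds do not meet, the fallback is a monotonicity argument. The idea is that the map $t\mapsto g(t)$ sends $(0,1)$ into a region where $g\circ g(t)>t$ for $t<1$. I would try to show that $g\circ g$ has $1$ as its only fixed point by comparing $(g\circ g)'(1)=g'(1)^2=1$ with a second-order term. Using $g''<0$, one gets
\[
(g\circ g)''(1)=g''(1)\,g'(1)+g'(1)^2\,g''(1)=g''(1)\bigl(g'(1)+1\bigr)=0,
\]
so a third-order or global argument is needed.

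\textbf{Main obstacle.}
Steps 1 and 2 are routine. The hard part is Step 3: first-order information is exactly degenerate, since $g'(1)=-1$ makes $g$ behave to first order like the involution $t\mapsto 2-t$. Ruling out a genuine $2$-cycle therefore needs a global inequality for $\psi$ that is sharper than Jensen-type bounds applied to each equation separately. I would first try to combine the two equations into a single function of $x$ alone and establish its sign by convexity in the exponents $b_i$. That reduces to the extremal configuration where all mass of $b$ sits at $0$ or $1$ (which yields equality and is excluded by hypothesis) plus a strict perturbation.
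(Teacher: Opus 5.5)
\textbf{The proposal has a genuine gap: it does not contain a proof.} Your Steps 1 and 2 are correct, and your bound $x+y<2$ is exactly what is needed. Step 3, however, is the whole content of the lemma, and you only state intentions there. The tools you assemble cannot close it.

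\begin{itemize}
\item The Step 2 inequalities use only $\sum_i b_i=2$. Together with $y<2-x$, they are simultaneously satisfiable. For instance, $x=0.9$, $y=1.095$ satisfies $y>x(1-2\ln x)$, $y<2-x$ and $x>y(1-2\ln y)$. So no manipulation of those bounds can produce a contradiction.
\item The computation $(g\circ g)''(1)=0$ is purely local information at $t=1$. At best, a third-order expansion there excludes $2$-cycles near $1$, not all pairs $0<x<1<y$.
\item The proposed reduction via convexity in the $b_i$ is never formulated. It is not clear that $g\circ g$ depends convexly on $(b_i)$ at all.
\end{itemize}

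\textbf{The missing idea.} It is the third-order sign you anticipate, used globally rather than at $t=1$. Put $h(t)=g(t)-(2-t)=\sum_i t^{a_i}-(m-2)t-2$.

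\begin{enumerate}
\item The swap conditions are equivalent to $h(x)=h(y)=x+y-2$; this is your chord observation.
\item Because every $a_i\in[0,1]$, we have $h''(t)=\sum_i a_i(a_i-1)t^{a_i-2}<0$ and $h'''(t)=\sum_i a_i(a_i-1)(a_i-2)t^{a_i-3}>0$ on all of $(0,\infty)$. Both inequalities are strict because some $a_i\in(0,1)$.
\item Concavity together with $h(1)=h'(1)=0$ gives $h<0$ for $t\neq 1$, hence $x+y<2$. Setting $A=1-x$ and $B=y-1$, this means $0<B<A<1$. It also gives that $h$ is strictly decreasing on $(1,\infty)$.
\item The sign of $h'''$ converts into an asymmetry. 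Let $D(s)=h(1+s)-h(1-s)$. Then $D(0)=D'(0)=0$ and $D''(s)=h''(1+s)-h''(1-s)>0$, so $h(1-s)<h(1+s)$ for $0<s<1$.
\item Therefore
\[
h(x)=h(1-A)<h(1+A)<h(1+B)=h(y),
\]
which contradicts $h(x)=h(y)$.
\end{enumerate}

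In short, $x$ lies farther from $1$ than $y$, and $h$ falls off faster to the left of $1$ than to the right. So $h$ cannot take equal values at $x$ and $y$.
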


\begin{proof}
Since $g(1)=1$ and
$g'(1)=\sum_i a_i-(m-1)=-1$, the tangent line to $g$
at $1$ is $2-t$. Define
$h(t)=g(t)-(2-t)
=\sum_{i=1}^m t^{a_i}-(m-2)t-2$ for $t>0$.
Then $h$ is smooth on $(0,\infty)$ and
$h(1)=h'(1)=0$. Moreover,
\[
h''(t)=\sum_{i=1}^m a_i(a_i-1)t^{a_i-2}<0,
\qquad
h'''(t)=\sum_{i=1}^m
a_i(a_i-1)(a_i-2)t^{a_i-3}>0
\]
for every $t>0$. Indeed, exponents in $\{0,1\}$
contribute zero to both sums, whereas each exponent
in $(0,1)$ contributes negatively to the first and
positively to the second. Thus $h'$ is strictly
decreasing and $h''$ is strictly increasing.
As $h'(1)=0$, the function $h$ is strictly increasing
on $(0,1)$ and strictly decreasing on $(1,\infty)$.
Consequently, $h(t)<h(1)=0$ whenever $t\ne1$.

Suppose that $0<x<1<y$ satisfies $g(x)=y$ and $g(y)=x$.
Then $h(x)=h(y)=x+y-2<0$, and hence $x+y<2$.
Putting $A=1-x$ and $B=y-1$, we obtain $0<B<A<1$.

Define $D(t)=h(1+t)-h(1-t)$ for $|t|<1$.
Differentiation gives
$D'(t)=h'(1+t)+h'(1-t)$ and
$D''(t)=h''(1+t)-h''(1-t)$.
Thus $D(0)=D'(0)=0$, and the strict increase of $h''$
implies $D''(t)>0$ for $0<t<1$.
Integrating twice yields
$D(t)=\int_0^t(t-s)D''(s)\,ds>0$ for $0<t<1$.
These integrations involve only positive arguments of $h$;
in particular, for $0\le t\le A$, those arguments lie
in $[1-A,1+A]\subset(0,\infty)$.
Taking $t=A$ therefore gives $h(1-A)<h(1+A)$.

Finally, since $A>B>0$ and $h$ is strictly decreasing
on $(1,\infty)$,
\[
h(x)=h(1-A)<h(1+A)<h(1+B)=h(y),
\]
contrary to $h(x)=h(y)$.
\end{proof}

\begin{remark}
The assumption that some $a_i\in(0,1)$ is necessary.
If all $a_i\in\{0,1\}$, then $\sum_i a_i=m-2$ implies
that exactly two exponents are zero and the others are one.
Thus $g(t)=2-t$, and every $x\in(0,1)$, with $y=2-x$,
satisfies $g(x)=y$ and $g(y)=x$.
\end{remark}

For a graph with degree sequence $d_1,\ldots,d_m$ and
$e>1$ edges, the handshaking identity gives
$\sum_{i=1}^m(1-d_i/e)=m-2$.
Applying Lemma~\ref{lem:scalar} with exponents
$a_i=1-d_i/e$ yields the following obstruction to
solutions on opposite sides of $1$.

\begin{proposition}\label{prop:degrees}
Let $F$ be a simple graph with vertex set
$\{v_1,\ldots,v_m\}$ and $e=e(F)>1$ edges, and write
$d_i=d_F(v_i)$ for $i\in[m]$. Then no pair $0<u<1<v$
satisfies
\begin{align}
\sum_{i=1}^m u^{e-d_i}
&=(m-1)u^e+v^e,
\label{eq:degreeu}\\
\sum_{i=1}^m v^{e-d_i}
&=u^e+(m-1)v^e.
\label{eq:degreev}
\end{align}
\end{proposition}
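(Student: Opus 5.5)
The plan is to reduce Proposition~\ref{prop:degrees} directly to Lemma~\ref{lem:scalar} by the substitution $x=u^e$, $y=v^e$ and exponents $a_i=1-d_i/e$. Since $t\mapsto t^e$ is a strictly increasing bijection of $(0,\infty)$ fixing $1$, a pair $0<u<1<v$ corresponds exactly to a pair $0<x<1<y$. We also have $u^{e-d_i}=(u^e)^{1-d_i/e}=x^{a_i}$, and similarly $v^{e-d_i}=y^{a_i}$.

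First I check that the equations transform correctly. Equation \eqref{eq:degreeu} becomes $\sum_i x^{a_i}=(m-1)x+y$, which says $g(x)=y$ for $g(t)=\sum_i t^{a_i}-(m-1)t$. Equation \eqref{eq:degreev} becomes $\sum_i y^{a_i}=x+(m-1)y$, which says $g(y)=x$. A solution $(u,v)$ of the proposition would therefore give a pair $(x,y)$ excluded by Lemma~\ref{lem:scalar}, provided the hypotheses of that lemma hold.

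Next I verify those hypotheses.
\begin{itemize}
\item Since $0\le d_i\le e$, each $a_i$ lies in $[0,1]$.
\item By the handshaking identity, $\sum_i a_i=m-2e/e=m-2$, as already noted before the proposition.
\item Since $F$ is simple with $e>1$, it has at least two distinct edges. These span at least three vertices, so $m\ge3$.
\end{itemize}

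The only nontrivial step is showing that some $a_i\in(0,1)$, that is, some vertex has $0<d_i<e$. Suppose instead that every degree is $0$ or $e$. Then $\sum_i d_i=2e$ forces exactly two vertices of degree $e$ and all others isolated. Every edge then meets only these two vertices, so every edge joins the same pair. Simplicity gives $e\le1$, a contradiction. With all hypotheses verified, Lemma~\ref{lem:scalar} rules out the pair $(x,y)$, and hence the pair $(u,v)$.
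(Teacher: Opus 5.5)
Your proposal is correct and follows the paper's proof essentially step for step: the same substitution $x=u^e$, $y=v^e$ with exponents $a_i=1-d_i/e$, followed by a check of the hypotheses of Lemma~\ref{lem:scalar}. The only small variation is how you find an exponent in $(0,1)$. You argue by contradiction via the degree sum, whereas the paper directly picks a vertex lying in one edge but not another. Both arguments are valid.
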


\begin{proof}
Since $F$ is simple and $e>1$, we have $m\ge3$.
Put $a_i=1-d_i/e$ for $i\in[m]$.
The bounds $0\le d_i\le e$ give $a_i\in[0,1]$,
while the handshaking identity yields
$\sum_i a_i=m-e^{-1}\sum_i d_i=m-2$.

To verify that some $a_i\in(0,1)$, choose distinct
edges $e_1,e_2\in E(F)$. These are distinct two-element
vertex sets, so there is a vertex
$v_\ell\in e_1\setminus e_2$.
Since $e_1$ is incident with $v_\ell$ and $e_2$ is not,
we have $1\le d_\ell\le e-1$, and hence
$0<a_\ell<1$. Thus $a_1,\ldots,a_m$ satisfy all the
hypotheses of Lemma~\ref{lem:scalar}.

Suppose that $0<u<1<v$ satisfies
\eqref{eq:degreeu} and \eqref{eq:degreev}.
Set $x=u^e$ and $y=v^e$. Since $e>0$,
we have $0<x<1<y$. Moreover, $u,v>0$ implies
$x^{a_i}=u^{e-d_i}$ and $y^{a_i}=v^{e-d_i}$
for every $i\in[m]$. The two equations therefore become
$\sum_i x^{a_i}=(m-1)x+y$ and
$\sum_i y^{a_i}=x+(m-1)y$.
For $g(t)=\sum_i t^{a_i}-(m-1)t$, these identities
give $g(x)=y$ and $g(y)=x$, contradicting
Lemma~\ref{lem:scalar}.
\end{proof}

Proposition~\ref{prop:degrees} establishes the degree-sequence
assertion proposed in \cite[Remark~7.8]{JS}.
Isolated vertices contribute exponents equal to one and
require no separate treatment. We now apply this obstruction
to the algebraic system arising from balanced counts.
The equations indexed by $0$ and $m$ determine the affine
parameters $c$ and $b$, while those indexed by $1$ and
$m-1$ lead, after rescaling, to the degree equations.
The following theorem shows that these four equations
already force the constant solution. In particular, it
resolves \cite[Problem~6.5]{J26}, which asks to exclude
nonconstant solutions of the full system.

\begin{theorem}\label{thm:algebra}
Let $F$ be a simple graph on $m$ vertices with
$e=e(F)>1$. Suppose that $U,V,S\ge0$ and
$c,b\in\RR$, with $(c,b)\ne(0,0)$, satisfy
\begin{equation}\label{eq:system}
\sum_{\substack{T\subseteq V(F)\\|T|=k}}
U^{e_F(T)}V^{e_F(T^c)}S^{e_F(T,T^c)}
=\binom{m}{k}(c+bk)
\end{equation}
for every $k\in\{0,1,m-1,m\}$.
Then $U=V=S>0$ and $b=0$.
\end{theorem}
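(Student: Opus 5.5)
The plan is to write out the four equations of \eqref{eq:system} explicitly, eliminate $c$ and $b$, and reduce to the degree equations of Proposition~\ref{prop:degrees}. The degenerate and same-side cases that the proposition does not cover will be handled with the function $h$ from the proof of Lemma~\ref{lem:scalar}.

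\emph{Explicit equations and elimination of $c,b$.} For $k=0$ the only set is $T=\varnothing$, so $V^e=c$. For $k=m$ we get $U^e=c+bm$. For $k=1$, with $T=\{i\}$, we have $e_F(T)=0$, $e_F(T^c)=e-d_i$ and $e_F(T,T^c)=d_i$. For $k=m-1$ the roles of $U$ and $V$ are swapped. Thus
\[
\sum_{i=1}^m V^{e-d_i}S^{d_i}=m(c+b),\qquad
\sum_{i=1}^m U^{e-d_i}S^{d_i}=m(c+b(m-1)),
\]
with the convention $0^0=1$. Substituting $c=V^e$ and $mb=U^e-V^e$ gives
\[
\sum_i V^{e-d_i}S^{d_i}=U^e+(m-1)V^e,\qquad
\sum_i U^{e-d_i}S^{d_i}=(m-1)U^e+V^e .
\]

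\emph{Case $S=0$.} Only isolated vertices survive on the left. Let $m_0\le m-2$ be their number; the bound holds because some edge has two non-isolated endpoints. The first equation then becomes $(m_0-m+1)V^e=U^e$. Its left side is at most $-V^e\le 0$ and its right side is at least $0$, so $U=V=0$. Hence $c=b=0$, which contradicts $(c,b)\ne(0,0)$.

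\emph{Case $S>0$.} Divide by $S^e$ and put $u=U/S$, $v=V/S$, $x=u^e$, $y=v^e$ and $a_i=1-d_i/e$. As in the proof of Proposition~\ref{prop:degrees}, the $a_i$ lie in $[0,1]$, satisfy $\sum_i a_i=m-2$, and at least one lies in $(0,1)$. With $g(t)=\sum_i t^{a_i}-(m-1)t$, now considered on $[0,\infty)$, the equations read $g(x)=y$ and $g(y)=x$. Let $h(t)=g(t)-(2-t)$ as in Lemma~\ref{lem:scalar}. Then $h(x)=h(y)=x+y-2$. The lemma's argument shows that $h$ is strictly increasing on $[0,1]$ (by continuity at $0$), strictly decreasing on $[1,\infty)$, and $h(t)<0$ for $t\ne1$. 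The cases are as follows.
\begin{itemize}
\item If $x$ and $y$ lie on opposite sides of $1$ with both positive, Proposition~\ref{prop:degrees} applies directly, or its mirror image after exchanging the roles of $U$ and $V$, since the system is symmetric.
\item If $x,y\ge1$ and not both equal $1$, then $h(x)=x+y-2>0$, which is impossible.
\item If $x,y\in[0,1]$, strict monotonicity of $h$ on $[0,1]$ forces $x=y$, so $g(x)=x$, that is, $\sum_i x^{a_i}=mx$. For $x<1$ we have $x^{a_i}\ge x$ for every $i$, with strict inequality whenever $a_i<1$, and some such $a_i$ exists. Hence $\sum_i x^{a_i}>mx$, and so $x=y=1$.
\item The boundary case $x=0<1<y$ (or its mirror) is not covered by the proposition. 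Here $h(0)=y-2$ and $h(y)=y-2$. Since $h(0)<0$, this gives $y<2$; combined with strict decrease on $(1,\infty)$ and $h(1)=0$, we get $h(y)<0$. This alone is not contradictory. However $h(0)<h(1)=0$ and $h$ is continuous, so I would instead compare via Lemma~\ref{lem:scalar}'s $D$-argument extended to $t=A=1$, which gives $h(0)<h(2)$, while $y<2$ gives $h(y)>h(2)$. That only yields $h(0)<h(y)$ if the comparison goes the right way; in fact $h(0)<h(2)<h(y)$, contradicting $h(0)=h(y)$.
\end{itemize}
In every surviving case $u=v=1$, so $U=V=S>0$, and then $b=(U^e-V^e)/m=0$.

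The main obstacle is this bookkeeping at the boundary: the cases $u=0$ or $v=0$, and the same-side cases, are not covered by Proposition~\ref{prop:degrees}. They require extending the monotonicity and convexity facts about $h$ from the proof of Lemma~\ref{lem:scalar} to the closed interval $[0,\infty)$, using continuity and the convention $0^0=1$.
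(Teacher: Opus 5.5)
Your overall route (dispose of $S=0$, normalize by $S$, then do a case analysis in the $(x,y)$-plane with $h$ extended to $[0,\infty)$) is workable, but the third bullet contains a false step. You claim that for $x<1$ one has $x^{a_i}>x$ whenever $a_i<1$, and hence $\sum_i x^{a_i}>mx$. At $x=0$ this fails for every $a_i\in(0,1)$, because $0^{a_i}=0$. In fact $\sum_i 0^{a_i}=q$, where $q$ is the number of vertices of degree $e$.

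When $q=0$ the pair $x=y=0$ satisfies $g(x)=y$ and $g(y)=x$, so your conclusion ``$x=y=1$'' does not follow. For example, take $F=K_3$, where all $a_i=1/3$. This solution is exactly the example $U=V=0$, $S=1$ in the remark after the theorem, which satisfies the whole system with $c=b=0$. A telling symptom is that your branch $S>0$ never uses the hypothesis $(c,b)\ne(0,0)$, although the statement is false without it.

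The repair is one line. The case $x=y=0$ means $U=V=0$, so $c=V^e=0$ and $mb=U^e-V^e=0$, contradicting $(c,b)\ne(0,0)$.

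With that fix, your argument is a correct alternative to the paper's. The paper never needs $h$ on the closed half-line. It handles $U=V$ by strict monotonicity of $\sum_i U^{e-d_i}S^{d_i}$ in $S$. For $U>V$ it derives, by direct comparisons in the $k=1$ and $k=m-1$ equations:
\begin{itemize}
\item first $S<U$;
\item then $V>0$, using that at most one vertex has degree $e$;
\item then $S>V$.
\end{itemize}
So Proposition~\ref{prop:degrees} is invoked only for $0<V<S<U$.

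Your fourth bullet can be shortened in the same spirit. At $x=0$, the equation $g(x)=y$ reads $y=g(0)=q\le1$, which contradicts $y>1$ at once. The extended $D$-argument you give there is valid, since $D'>0$ on $(0,1)$ and $D$ is continuous on $[0,1]$, but it is unnecessary.
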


\begin{proof}
Enumerate the vertices of $F$ as $v_1,\ldots,v_m$ and
write $d_i=d_F(v_i)$. The equations with $k=0$ and
$k=m$ give
\begin{equation}\label{eq:ends}
V^e=c,\qquad U^e=c+mb.
\end{equation}
For $T=\{v_i\}$, we have $e_F(T)=0$,
$e_F(T^c)=e-d_i$ and $e_F(T,T^c)=d_i$.
Taking $T=\{v_i\}$ and $T=V(F)\setminus\{v_i\}$,
respectively, therefore gives
\begin{align}
\sum_{i=1}^m V^{e-d_i}S^{d_i}
&=m(c+b),
\label{eq:one}\\
\sum_{i=1}^m U^{e-d_i}S^{d_i}
&=m\bigl(c+(m-1)b\bigr).
\label{eq:almostall}
\end{align}

First suppose that $U=V$. Equation~\eqref{eq:ends}
gives $b=0$ and $c=U^e$. The assumption
$(c,b)\ne(0,0)$ then implies $U>0$.
Equation~\eqref{eq:one} becomes
$\sum_i U^{e-d_i}S^{d_i}=mU^e$.
The left-hand side is strictly increasing as a function
of $S\ge0$: all coefficients $U^{e-d_i}$ are positive,
all terms are nondecreasing, and at least one $d_i$
is positive. Its value at $S=U$ is $mU^e$, so
$S=U=V$.

Suppose now that $U\ne V$. Replacing $T$ by $T^c$
in~\eqref{eq:system} interchanges $U$ and $V$ and
replaces $k$ by $m-k$. Since
$\binom{m}{m-k}=\binom{m}{k}$ and
$c+b(m-k)=(c+mb)-bk$, the parameters
$(V,U,S,c+mb,-b)$ also satisfy the four equations.
Here the index set $\{0,1,m-1,m\}$ is invariant under
$k\mapsto m-k$, and the transformation
$(c,b)\mapsto(c+mb,-b)$ is invertible, so it preserves
the condition $(c,b)\ne(0,0)$.
We may therefore assume that $U>V\ge0$.
By~\eqref{eq:ends}, $mb=U^e-V^e>0$, and hence $b>0$.

We first show that $S<U$. If $S\ge U$, then
$U^{e-d_i}S^{d_i}\ge U^e$ for every $i$.
Equation~\eqref{eq:almostall} consequently yields
\[
mU^e
\le\sum_{i=1}^m U^{e-d_i}S^{d_i}
=m\bigl(c+(m-1)b\bigr)
=m(U^e-b)<mU^e,
\]
a contradiction.

Next suppose that $V=0$. Then $c=0$ and
$U^e=mb>0$. Let $q$ be the number of vertices of
degree $e$. In~\eqref{eq:one}, a summand is zero
when $d_i<e$ and equals $S^e$ when $d_i=e$.
Thus $qS^e=mb=U^e$.
There is at most one vertex of degree $e$: if two
distinct vertices were incident with every edge,
every edge would join those two vertices, and
simplicity would imply $e\le1$.
Hence $q\le1$, and the inequality $S<U$ gives
$U^e=qS^e\le S^e<U^e$, a contradiction.
Therefore $V>0$.

If $S\le V$, then
$\sum_i V^{e-d_i}S^{d_i}\le mV^e<m(V^e+b)=m(c+b)$,
contrary to~\eqref{eq:one}. We have therefore obtained
$0<V<S<U$.

Finally, substitute $c=V^e$ and
$b=(U^e-V^e)/m$ into~\eqref{eq:one} and
\eqref{eq:almostall}. Dividing by $S^e>0$ and setting
$u=V/S$ and $v=U/S$ gives
\[
\sum_{i=1}^m u^{e-d_i}=(m-1)u^e+v^e,
\qquad
\sum_{i=1}^m v^{e-d_i}=u^e+(m-1)v^e.
\]
Since $0<u<1<v$, these equations contradict
Proposition~\ref{prop:degrees}.
Thus $U\ne V$ is impossible, and the first case
gives $U=V=S>0$ and $b=0$.
\end{proof}

\begin{remark}
The condition $(c,b)\ne(0,0)$ is necessary.
For $F=K_3$, take $U=V=0$, $S=1$ and $c=b=0$.
For every $T\subseteq V(F)$, at least one of
$e_F(T)$ and $e_F(T^c)$ is positive, so every
monomial on the left-hand side of~\eqref{eq:system}
vanishes. Thus the full system holds for
$k=0,\ldots,m$, although $U,V,S$ are not all equal.
\end{remark}

Although the full system contains an equation for every
$k=0,\ldots,m$, Theorem~\ref{thm:algebra} uses only the
four indices $0,1,m-1,m$ and requires no connectedness
or regularity assumption on $F$. The following implication
of the Janson--S\'os reduction connects this algebraic
rigidity with balanced subgraph counts.

\begin{lemma}[Janson--S\'os
{\cite[Lemma~6.3]{JS}}]\label{lem:JS}
Let $F$ be a graph on $m$ vertices with $e(F)>0$.
If $\tP^p(F;1/m,\ldots,1/m)$ is not a
$p$-quasirandom property for some $p\in(0,1]$, then
there exist $U,V,S\ge0$, not all equal, and
$c,b\in\RR$, with $(c,b)\ne(0,0)$, satisfying
\eqref{eq:system} for every $k=0,\ldots,m$.
\end{lemma}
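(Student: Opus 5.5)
The plan is to combine the graphon transfer principle with Janson's nullspace characterization, and then to evaluate the resulting additive identity on configurations supported on two points.

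\emph{Reduction to an additive identity.} First, the case $p=1$ is vacuous, because Lemma~\ref{lem:p-one} shows that $\tP^1(F;1/m,\ldots,1/m)$ is forcing. So assume $0<p<1$. By Lemma~\ref{lem:transfer}, nonforcing gives a graphon $W$, not a.e.\ equal to $p$, whose symmetrized integrals over $A_1\times\cdots\times A_m$ all equal $p^e m^{-m}$.

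Next I identify the bad indices. Here $r=m$, $\bm=(1,\ldots,1)$ and $L_{\balpha}=m^{-1}\sum_j z_j=m^{-1}e_1$. The polynomial $e_1$ divides $e_k$ only for $k=1$, since $e_k$ for $2\le k\le m$ is irreducible or at least not divisible by $e_1$; one can check this by setting $z_1=-(z_2+\cdots+z_m)$ and observing a nonzero result. Hence $\cK=\{1\}$, and \eqref{eq:kernel-test} gives
\[
\tPsi_{F,W}(x_1,\ldots,x_m)=p^e+\sum_{i=1}^m h(x_i)
\]
almost everywhere, for some mean-zero $h$.

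\emph{Two-point specialization.} Formally, put $k$ coordinates equal to $x$ and $m-k$ equal to $y$. Then $m!\,\tPsi$ becomes $k!(m-k)!\sum_{|T|=k}U^{e_F(T)}V^{e_F(T^c)}S^{e_F(T,T^c)}$, where $U=W(x,x)$, $V=W(y,y)$ and $S=W(x,y)$. Dividing out, this gives \eqref{eq:system} with
\[
c=p^e+m\,h(y),\qquad b=h(x)-h(y).
\]
If $U,V,S$ are all equal for every admissible pair $(x,y)$, then $W$ is a.e.\ constant, say $W=q$. Integrating the identity forces $q^e=p^e$, so $q=p$, which is a contradiction. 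Therefore some pair gives values not all equal. Moreover, $(c,b)=(0,0)$ would make the left side vanish for all $k$ at that pair. I would rule this out by choosing $(x,y)$ among positive-measure many candidates: the set where $(c,b)=(0,0)$ forces $h\equiv -p^e/m$ there, which is incompatible with $\tPsi\ge0$ being near $p^e$ on average. If that is awkward, I would exploit $(c,b)\ne(0,0)$ via the $k=0$ equation, which reads $V^e=c$.

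\emph{Main obstacle.} The main obstacle is that the substitution above evaluates $W$ on the diagonal and at repeated coordinates, which are null sets. I would handle this by an averaging and compactness argument. Take small intervals $I\ni x$ and $J\ni y$, and integrate the a.e.\ identity over $I^k\times J^{m-k}$ (symmetrized). The right side becomes $p^e+k\bar h_I+(m-k)\bar h_J$, which is exactly of the form $c+bk$ after multiplication by $\binom mk$. The left side is an average of products of $W$ over these boxes. To replace that average by the product of the three local values $U,V,S$, I need $W$ to be $L^1$-close to constants on $I\times I$, $J\times J$ and $I\times J$ simultaneously. For the off-diagonal block this follows from Lebesgue differentiation. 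The diagonal blocks are the real difficulty. There I would first pass, via the regularity lemma and the fact that the identity is preserved under measure-preserving rearrangements and cut-distance limits (the estimate after Lemma~\ref{lem:transfer}), to approximate step graphons. For step graphons, two distinct steps play the roles of $x$ and $y$, and the diagonal values are genuine. The resulting approximate solutions $(U,V,S,c,b)$ lie in a compact set, so a limit point satisfies \eqref{eq:system} exactly. What remains is to show that this limit stays nonconstant and keeps $(c,b)\ne(0,0)$, using that $W$ is at positive cut distance from $p$; I expect this to be the most delicate point.
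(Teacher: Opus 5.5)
\paragraph{Verdict.} The reduction and the formal computation are right, but the proposal has a genuine gap at exactly the step you flag as the main obstacle. Your fix does not close it, and the nondegeneracy of the resulting triple is left unproved.

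\paragraph{What is correct.} Your reduction to $\tPsi_{F,W}=p^e+\sum_i h(x_i)$ is fine: the $p=1$ remark holds, and $\cK=\{1\}$ because $e_1\nmid e_k$ for $k\ge2$. Your formal two-point computation, giving $c=p^e+mh(y)$ and $b=h(x)-h(y)$, is also correct. It matches how the paper describes the cited Janson--S\'os reduction: $U,V$ are within-type values, $S$ is the between-type value, and the additive kernel is affine in $k$ on a genuine two-type configuration.

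\paragraph{Why the fix fails.} A step graphon $W'$ from the regularity lemma does not satisfy the additive identity. It satisfies the equipartition identities only up to $O(e(F)\|W-W'\|_\square)$, which for $K$ steps is of order $(\log K)^{-1/2}$. A box $I^k\times J^{m-k}$ built from two steps has measure only $K^{-m}$. So you learn nothing about $\tPsi_{F,W'}$ on those boxes, and there are no approximate solutions with vanishing error for compactness to act on.
\begin{itemize}
\item Closure of the identity under cut-distance limits runs the wrong way: limits of graphons satisfying it satisfy it, but approximants of $W$ need not.
\item Using conditional expectations on fine partitions instead just moves the problem back to the diagonal blocks $I\times I$. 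There Lebesgue differentiation says nothing, and a general graphon need not look constant.
\item Your argument for $(c,b)\ne(0,0)$ is not valid as stated, since $p^e/m+h$ may a priori vanish on a set of positive measure. What works is to choose the first point $x$ with $p^e+mh(x)>0$, so that $c+mb>0$.
\end{itemize}

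\paragraph{The missing idea.} The identity itself determines the diagonal values. One way to exploit this is as follows.
\begin{enumerate}
\item Sample many i.i.d.\ points from two regions on which $h$ is within $\varepsilon$ of fixed values $\eta_1,\eta_2$. Almost surely the identity holds at every $m$-tuple of distinct sample points, so no diagonal evaluation is ever needed.
\item Apply Ramsey's theorem, with a bipartite Ramsey or K\H{o}v\'ari--S\'os--Tur\'an step for the cross pairs. This gives sets $Q,R$ of size $m$ on which $W$ is within $\varepsilon$ of constants: $U$ inside $Q$, $V$ inside $R$, and $S$ across.
\item On $Q$ itself the identity gives $U^e\approx p^e+m\eta_1$, and likewise $V^e\approx p^e+m\eta_2$. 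Mixed tuples give \eqref{eq:system} up to $O(\varepsilon)$.
\item Keep the triple nondegenerate as $\varepsilon\to0$. If $h$ is not a.e.\ constant, take $\eta_1\ne\eta_2$ with $p^e+m\eta_1>0$. If $h=0$, take the regions around a Lebesgue point where $W\ne p$. Either way the triple stays nonconstant with $(c,b)\ne(0,0)$, and compactness gives an exact solution.
\end{enumerate}
An equivalent route: near a point where $p^e+mh>0$, the identity is approximately a Simonovits--S\'os hereditary condition, which forces the local restriction of $W$ to be close to a constant.
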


The triple $(U,V,S)$ in Lemma~\ref{lem:JS} arises from
a two-type graphon: $U,V$ are its within-type values,
and $S$ is its between-type value. The reduction ensures
that the symmetrized subgraph kernel is additive, so
its value on a tuple with $k$ coordinates of the first
type is affine in $k$. Expressing this value as an
average over all placements of those coordinates gives
\eqref{eq:system}. \rev{Thus a counterexample to the balanced-count
conjecture would produce the nonconstant solution excluded by
Theorem~\ref{thm:algebra}, yielding the following proof.}

\begin{proof}[Proof of Theorem~\ref{thm:balanced}]
By Lemma~\ref{lem:uniform-counting}, every
$p$-quasirandom graph sequence satisfies both the
labelled and symmetrized balanced counting conditions.
It remains to prove the converse implications.
The case $p=1$ follows from Lemma~\ref{lem:p-one},
so assume that $0<p<1$.

Suppose that $\tP^p(F;1/m,\ldots,1/m)$ is not a
$p$-quasirandom property. By Lemma~\ref{lem:JS},
there exist $U,V,S\ge0$, not all equal, and
$c,b\in\RR$, with $(c,b)\ne(0,0)$, satisfying
\eqref{eq:system} for every $k=0,\ldots,m$.
In particular, the equations indexed by $0,1,m-1,m$
hold. Since $F$ is simple and $e(F)>1$,
Theorem~\ref{thm:algebra} gives $U=V=S>0$ and $b=0$,
contradicting the choice of the triple. Thus the
symmetrized balanced counting condition characterizes
$p$-quasirandomness.

Finally, suppose that a graph sequence satisfies the
labelled balanced counting condition. Since all classes
have the same prescribed size, every permutation of an
admissible tuple is admissible. Averaging the labelled
identities over $\mathfrak S_m$, with the uniform error
unchanged, yields the symmetrized counting condition.
The sequence is therefore $p$-quasirandom, proving
the labelled assertion.
\end{proof}

The two classifications use the following consequence
of Theorem~\ref{thm:balanced}, which is independent of
the class profile. An additive symmetrized subgraph
kernel has the same integral over every equipartition
product. If its mean is positive, these integrals
satisfy the balanced counting condition at a positive
density, forcing the underlying graphon to be constant.

\begin{corollary}\label{cor:additive}
Let $F$ be a simple graph on $m$ vertices with $e(F)>1$,
let $W$ be a graphon, and let $h\in L^1([0,1];\RR)$
satisfy $\mu=\int_0^1h(t)\,dt>0$. If
\[
\tPsi_{F,W}(x_1,\ldots,x_m)=\sum_{i=1}^m h(x_i)
\]
for almost every $(x_1,\ldots,x_m)\in[0,1]^m$, then
$W=q$ almost everywhere on $[0,1]^2$ and
$h=q^{e(F)}/m$ almost everywhere on $[0,1]$, where
$q=(m\mu)^{1/e(F)}\in(0,1]$.
\end{corollary}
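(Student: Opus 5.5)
We reduce the corollary to Theorem~\ref{thm:balanced} via the graphon transfer principle. The underlying observation is that when $\tPsi_{F,W}$ is additive, its integral over a balanced transversal product $A_1\times\cdots\times A_m$ does not depend on the particular disjoint sets chosen.

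First we compute these integrals. Let $A_1,\ldots,A_m\subseteq[0,1]$ be pairwise disjoint measurable sets with $\lambda(A_i)=1/m$. Integrating the identity $\tPsi_{F,W}=\sum_i h(x_i)$ gives
\[
\int_{A_1\times\cdots\times A_m}\tPsi_{F,W}
=\sum_{i=1}^m m^{-(m-1)}\int_{A_i}h
=m^{-(m-1)}\int_{\bigcup_iA_i}h .
\]
Since the $A_i$ are disjoint with total measure $1$, their union is $[0,1]$ up to a null set. The integral therefore equals $m^{-(m-1)}\mu=m\mu\cdot m^{-m}$.

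Next we identify the value $q$. Integrating the identity over all of $[0,1]^m$ gives $t(F,W)=m\mu$. Since $0\le\tPsi_{F,W}\le1$, this yields $0<m\mu\le1$, so $q=(m\mu)^{1/e}\in(0,1]$ is well defined and satisfies $q^{e}=m\mu$. The integrals computed above are thus exactly $q^{e}\prod_i(1/m)$, which is the identity in Lemma~\ref{lem:transfer} for the profile $\tP^q(F;1/m,\ldots,1/m)$.

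Now we handle the two ranges of $q$.

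If $q<1$, Lemma~\ref{lem:transfer} applies directly. Theorem~\ref{thm:balanced} states that this property is $q$-quasirandom, so the transfer lemma forces $W=q$ almost everywhere.

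If $q=1$, Lemma~\ref{lem:transfer} (stated only for $p<1$) is not available, and we argue directly instead. Here $t(F,W)=1$ and $W\le1$, so $\Psi_{F,W}=1$ almost everywhere. Fix an edge $ij$ of $F$; then $W(x_i,x_j)=1$ for almost every $(x_i,x_j)$, so $W=1=q$ almost everywhere.

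Alternatively, one can treat $q=1$ through graphs. Take $W$-random graphs $G_n$, so that $\delta_\square(W_{G_n},W)\to0$. The "more precisely" part of the transfer principle and Lemma~\ref{lem:p-one} then give the same conclusion, but the direct argument above is simpler.

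Finally, once $W=q$ we have $\tPsi_{F,W}=q^{e}$ constant, and so $\sum_i h(x_i)=q^{e}$ almost everywhere. We show $h$ is constant. Fix $x_2,\ldots,x_m$ off a null set; then $h(x_1)$ is determined by them, so $h$ is almost everywhere equal to some constant $\kappa$ (Fubini makes this precise). It follows that $m\kappa=q^{e}$, that is, $h=q^{e}/m$.

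The only delicate point is matching the hypotheses of the transfer lemma. The integral identities must be checked for every balanced disjoint family, including families whose union differs from $[0,1]$ by a null set; this is harmless. Separately, the endpoint $q=1$ requires the direct argument given above.
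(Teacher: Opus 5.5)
Your proposal is correct and follows essentially the same route as the paper. You compute the integral of the additive kernel over every balanced partition to get $q^{e}m^{-m}$ with $q^{e}=m\mu\in(0,1]$, invoke Theorem~\ref{thm:balanced} through the graphon correspondence to obtain $W=q$, and then read off $h=q^{e}/m$ from $\sum_i h(x_i)=q^{e}$. The only difference is at the endpoint $q=1$: the paper applies Theorem~\ref{thm:balanced} with the Janson--S\'os correspondence uniformly for $0<q\le1$, whereas you note that Lemma~\ref{lem:transfer} is stated only for $p<1$ and settle $q=1$ directly from $\int\Psi_{F,W}=1$ and $W\le1$, which is a clean and slightly more self-contained treatment of that case.
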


\begin{proof}
Put $e=e(F)$. Each function $h(x_i)$ is integrable
on $[0,1]^m$, so integrating the assumed identity
and applying Fubini's theorem gives
$0<m\mu=\int_{[0,1]^m}\tPsi_{F,W}\le1$,
where the upper bound follows from
$0\le\tPsi_{F,W}\le1$. Thus
$q=(m\mu)^{1/e}$ is well defined and belongs to $(0,1]$.

Let $A_1,\ldots,A_m$ be any measurable partition
of $[0,1]$ with $\lambda(A_i)=1/m$ for every $i$.
By the assumed identity and Fubini's theorem,
\[
\begin{aligned}
\int_{A_1\times\cdots\times A_m}\tPsi_{F,W}
&=\sum_{i=1}^m
  \left(\prod_{j\ne i}\lambda(A_j)\right)
  \int_{A_i}h(t)\,dt\\
&=\frac{\mu}{m^{m-1}}
 =q^e\prod_{i=1}^m\lambda(A_i).
\end{aligned}
\]
Hence $W$ satisfies the symmetrized balanced graphon
counting condition with density parameter $q$.
Since $0<q\le1$, Theorem~\ref{thm:balanced} and
the graph-sequence--graphon correspondence
\cite[Lemma~3.4]{JS} imply that $W=q$ almost everywhere.

For each pair $i<j$, Fubini's theorem shows that
the set of tuples for which $W(x_i,x_j)\ne q$
has measure zero. Outside the finite union of these
exceptional sets, every factor in every product
defining $\tPsi_{F,W}$ equals $q$.
Thus $\tPsi_{F,W}=q^e$ almost everywhere, and the
assumed identity gives $\sum_i h(x_i)=q^e$ almost
everywhere. Integrating with respect to
$x_2,\ldots,x_m$ yields
$h(x_1)+(m-1)\mu=q^e=m\mu$ for almost every $x_1$.
Therefore $h=\mu=q^e/m$ almost everywhere.
\end{proof}

By \eqref{eq:kernel-test}, the inclusion
$\cM_{\bm,\balpha}\subseteq\HH_{m,1}$ ensures that
the discrepancy from the constant subgraph kernel is
additive. Corollary~\ref{cor:additive} therefore yields
the following forcing criterion, with no restriction
on the number of classes.

\begin{corollary}\label{cor:additive-profile}
Let $F$ be a simple graph on $m$ vertices with $e(F)>1$,
let $0<p\le1$, and let $(\bm,\balpha)$ be an admissible
profile. If $\cM_{\bm,\balpha}\subseteq\HH_{m,1}$,
then $\tP^p_{\bm}(F;\balpha)$ is a
$p$-quasirandom property.
\end{corollary}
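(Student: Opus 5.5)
The plan is to prove Corollary~\ref{cor:additive-profile} directly from Corollary~\ref{cor:additive}, using the transfer principle and the nullspace description.

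First, one direction is already available. By Lemma~\ref{lem:uniform-counting}, every $p$-quasirandom sequence satisfies $\tP^p_{\bm}(F;\balpha)$, so only the converse needs proof. The case $p=1$ is Lemma~\ref{lem:p-one}, so we assume $0<p<1$.

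By Lemma~\ref{lem:transfer} it then suffices to take a graphon $W$ satisfying
\[
\int_{A_1^{m_1}\times\cdots\times A_r^{m_r}}\tPsi_{F,W}=p^e\prod_j\alpha_j^{m_j}
\]
for every admissible disjoint family, and to show that $W=p$ almost everywhere. Put $f=\tPsi_{F,W}-p^e$. This function is symmetric and bounded, hence lies in $L^2([0,1]^m)$. Its integral over every admissible product vanishes, so $f\in\cM_{\bm,\balpha}\subseteq\HH_{m,1}$. By the description of $\HH_{m,1}$, there is $h_0\in L^2([0,1])$ with $\int h_0=0$ and $f(x)=\sum_i h_0(x_i)$ almost everywhere.

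Now set $h=h_0+p^e/m$. Then $h\in L^1$ and $\mu=\int h=p^e/m>0$, and
\[
\tPsi_{F,W}=\sum_i h(x_i).
\]
Since $e(F)>1$, Corollary~\ref{cor:additive} applies. It gives $W=q$ almost everywhere with $q=(m\mu)^{1/e}=(p^e)^{1/e}=p$. Lemma~\ref{lem:transfer} then yields forcing.

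The only step needing care is the membership $f\in\cM_{\bm,\balpha}$: the definition of $\cM$ asks for symmetric $L^2$ functions whose integrals vanish over the products, and this matches exactly the identity supplied by the transfer lemma for the symmetrized kernel. In particular, the inclusion hypothesis is used directly, and Theorem~\ref{thm:nullspace} is not needed. Otherwise the argument is routine.
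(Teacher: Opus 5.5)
Your proposal is correct and follows the paper's proof essentially step for step. The steps are uniform counting for the easy direction, Lemma~\ref{lem:p-one} for $p=1$, the transfer lemma, membership of $\tPsi_{F,W}-p^{e}$ in $\cM_{\bm,\balpha}\subseteq\HH_{m,1}$, the shift $h=h_0+p^e/m$, and Corollary~\ref{cor:additive} with $q=p$. Your remark that Theorem~\ref{thm:nullspace} is not needed is also accurate: the paper cites \eqref{eq:kernel-test} for the membership step, but that step uses only the definition of $\cM_{\bm,\balpha}$.
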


\begin{proof}
Every $p$-quasirandom graph sequence satisfies the
prescribed counting condition by
Lemma~\ref{lem:uniform-counting}. Conversely, the case
$p=1$ follows from Lemma~\ref{lem:p-one}, so assume
that $0<p<1$.

Put $e=e(F)$. By Lemma~\ref{lem:transfer}, it suffices
to consider a graphon $W$ satisfying the prescribed
symmetrized integral identities. By
\eqref{eq:kernel-test},
$\tPsi_{F,W}-p^e\in\cM_{\bm,\balpha}\subseteq\HH_{m,1}$.
Thus there exists $h_0\in L^2([0,1];\RR)$ with
$\int_0^1h_0=0$ such that
$\tPsi_{F,W}-p^e=\sum_{i=1}^m h_0(x_i)$
almost everywhere. Set $h=h_0+p^e/m$. Then
$h\in L^1([0,1];\RR)$, $\int_0^1h=p^e/m>0$, and
$\tPsi_{F,W}=\sum_{i=1}^m h(x_i)$ almost everywhere.
Corollary~\ref{cor:additive} therefore gives $W=q$
almost everywhere, where
$q=(m\int_0^1h)^{1/e}=p$.
This proves the converse.
\end{proof}

Corollary~\ref{cor:additive-profile} settles
\cite[Conjecture~6.3]{J26}, while
Theorem~\ref{thm:algebra} resolves
\cite[Problem~6.5]{J26}. The hypothesis $e(F)>1$
is essential to the rigidity argument: when $F$
has exactly one edge, the exponents in
Lemma~\ref{lem:scalar} are two zeros and $m-2$ ones,
so its strictness hypothesis fails. The next section
completes the repeated-class classification. We first
show that, for profiles with at least three classes,
only additive obstructions can occur. Additive rigidity
then settles the case $e(F)>1$, leaving the single-edge
case to determine the exceptions.

\section{Symmetrized repeated-class counts}\label{sec:repeated}

In this section, we show that profiles with at least
three classes admit only additive obstructions and
then complete the classification.
By Theorem~\ref{thm:nullspace}, excluding higher-order
bad indices amounts to showing that $E_k$, for $k\ge2$,
has no linear divisor with all coefficients positive
when $r\ge3$. To prove this non-divisibility statement,
we first consider a two-value specialization of the
elementary symmetric polynomial. The following lemma
provides the root simplicity needed in the argument.

\begin{lemma}\label{lem:app520-simple}
Let $A,B$ be positive integers, put $N=A+B$, and let
$1\le k\le N$. Then every negative real zero of
\[
R_{A,B,k}(t)
=e_k(\underbrace{1,\ldots,1}_{A},
     \underbrace{t,\ldots,t}_{B})
\]
is simple.
\end{lemma}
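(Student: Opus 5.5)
The plan is to show that $R_{A,B,k}$ satisfies a second-order linear differential equation whose leading coefficient does not vanish on $(-\infty,0)$. A double zero at a negative point would then, by uniqueness for linear ODEs, force $R_{A,B,k}\equiv0$. That is impossible, since $R_{A,B,k}(1)=\binom Nk>0$.

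\emph{Coefficients and recurrence.} Grouping the $k$-subsets by how many of their elements are taken from the $t$-block gives
\[
R_{A,B,k}(t)=\sum_{j\ge0}c_jt^j,\qquad c_j=\binom{A}{k-j}\binom{B}{j},
\]
with the usual convention that binomial coefficients with out-of-range lower index vanish. Equivalently, $R_{A,B,k}(t)$ is the coefficient of $x^k$ in $(1+x)^A(1+tx)^B$. From the factorial formulas, one checks the two-term recurrence
\[
(j+1)(A-k+j+1)\,c_{j+1}=(k-j)(B-j)\,c_j\qquad(j\ge -1).
\]
This must be checked at the boundary indices, where one side vanishes:
\begin{itemize}
\item at $j=-1$ the left-hand side has the factor $0$;
\item at $j=k-A-1$ (relevant when $k>A$) the left-hand side again has the factor $0$, and $c_{k-A-1}=0$;
\item at $j=\min(k,B)$ the right-hand side vanishes, consistent with $c_{j+1}=0$.
\end{itemize}
So the identity holds for every integer $j\ge-1$, and no case split between $k\le A$ and $k>A$ is needed. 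I expect this bookkeeping at the endpoints to be the only delicate point. The alternative route of writing $R$ as a multiple of ${}_2F_1(-k,-B;A-k+1;t)$ breaks down when $A-k+1\le0$, which is why I use the recurrence directly.

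\emph{Differential equation.} Let $\theta=t\,\frac{d}{dt}$, so that $\theta t^j=jt^j$. Multiply the recurrence by $t^{j+1}$ and sum over $j$. This gives
\[
\theta(\theta+A-k)R_{A,B,k}=t\,(\theta-k)(\theta-B)R_{A,B,k}.
\]
Expanding $\theta^2=t^2\frac{d^2}{dt^2}+t\frac{d}{dt}$ yields an equation
\[
t^2(1-t)R''+p_1(t)R'+p_0(t)R=0
\]
with polynomial coefficients $p_1,p_0$. The leading coefficient $t^2(1-t)$ is nonzero for every $t<0$.

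\emph{Conclusion.} Suppose $t_0<0$ satisfies $R_{A,B,k}(t_0)=R_{A,B,k}'(t_0)=0$. On the interval $(-\infty,0)$ we may divide by $t^2(1-t)$, which turns the equation into a regular linear second-order ODE with continuous coefficients. Both $R_{A,B,k}$ and the zero function solve it with the same initial data at $t_0$. By the uniqueness theorem they agree on $(-\infty,0)$. Since $R_{A,B,k}$ is a polynomial, it is then identically zero, contradicting $R_{A,B,k}(1)=\binom Nk>0$. Hence every negative real zero is simple.
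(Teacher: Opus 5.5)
Your proof is correct, and it takes a genuinely different route from the paper.

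The paper never works with the coefficients of $R_{A,B,k}$. It sets $H(x)=x^A(x-1)^B$ and $d=N-k$, and uses the identity $H^{(d)}(x)=d!\,x^kR_{A,B,k}(1-1/x)$. A Rolle-type induction with multiplicity counting then shows that every $H^{(j)}$ with $j\le N-1$ has all its zeros in $[0,1]$, and that its zeros in $(0,1)$ are simple. Finally, differentiating the identity transfers simplicity from $x_0=1/(1-t_0)\in(0,1)$ back to $t_0$.

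You instead turn the two-term recurrence for $c_j=\binom{A}{k-j}\binom{B}{j}$ into the hypergeometric equation $t^2(1-t)R''+t\bigl[(A-k+1)+(k+B-1)t\bigr]R'-kBtR=0$. Your checks at the endpoints $j=-1$, $j=k-A-1$ and $j=\min(k,B)$ are exactly the ones needed. You then apply uniqueness at a point where the leading coefficient does not vanish.

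Each approach buys something different:
\begin{itemize}
\item The paper's method gives location information that yours does not: all zeros of $R_{A,B,k}$ are real and nonpositive.
\item Your method gives a different strengthening: every zero other than $t=0$ is simple, whether real or complex. The point $t=1$ causes no trouble because $R_{A,B,k}(1)=\binom Nk\ne0$.
\item Your argument can also be made purely algebraic, without the analytic uniqueness theorem. Differentiate the equation repeatedly at $t_0\notin\{0,1\}$; then $R(t_0)=R'(t_0)=0$ forces every derivative of the polynomial to vanish at $t_0$.
\end{itemize}
For the application in Theorem~\ref{thm:app520-factor}, only simplicity at negative points is used, so either route suffices.
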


\begin{proof}
If $k=N$, then $R_{A,B,k}(t)=t^B$, so there are no
negative real zeros. Assume henceforth that $1\le k<N$,
and put $d=N-k$ and $H(x)=x^A(x-1)^B$.
The Leibniz formula and the homogeneity of $e_k$ give,
for $x\ne0$,
\begin{equation}\label{eq:derivative-R}
H^{(d)}(x)
=d!\,e_k(\underbrace{x,\ldots,x}_{A},
         \underbrace{x-1,\ldots,x-1}_{B})
=d!\,x^kR_{A,B,k}\left(1-\frac1x\right).
\end{equation}

We first show by induction that every $H^{(j)}$,
for $0\le j\le N-1$, has all its zeros in $[0,1]$,
with every zero in $(0,1)$ simple.
The assertion holds for $H$, whose only zeros are
$0$ and $1$.

Suppose it holds for $P=H^{(j)}$, where $j\le N-2$,
and put $n=\deg P=N-j\ge2$.
Let $z_1<\cdots<z_\ell$ be the distinct zeros of $P$,
with multiplicities $r_1,\ldots,r_\ell$.
Each $z_i$ lies in $[0,1]$, and $\sum_i r_i=n$.
If $r_i\ge2$, then $z_i$ is a zero of $P'$ of
multiplicity exactly $r_i-1$. Rolle's theorem also
gives a zero of $P'$ in each interval $(z_i,z_{i+1})$.
These gap zeros are distinct from the retained zeros,
so they account for at least
$\sum_i(r_i-1)+(\ell-1)=n-1$ zeros of $P'$, counted
with multiplicity. Since $\deg P'=n-1$, they account
for all its zeros. Thus each gap contains exactly
one zero, which is simple. Moreover, retained zeros
can occur only at the endpoints, since every interior
zero of $P$ is simple by the induction hypothesis.
Hence all zeros of $P'$ lie in $[0,1]$, and every
interior zero is simple. This completes the induction.

Let $t_0<0$ be a zero of $R_{A,B,k}$, and set
$x_0=1/(1-t_0)$. Then $x_0\in(0,1)$ and
$t_0=1-1/x_0$, so~\eqref{eq:derivative-R} gives
$H^{(d)}(x_0)=0$. Since $1\le d\le N-1$, this
zero is simple, and therefore $H^{(d+1)}(x_0)\ne0$.
Differentiating~\eqref{eq:derivative-R} and using
$R_{A,B,k}(t_0)=0$ yields
\[
H^{(d+1)}(x_0)
=d!\,x_0^{k-2}R'_{A,B,k}(t_0).
\]
As $x_0>0$, the prefactor is nonzero.
Thus $R'_{A,B,k}(t_0)\ne0$, proving the assertion.
\end{proof}

With the simplicity of the negative roots established,
we now turn to the grouped polynomials $E_k$.
A linear divisor with positive coefficients would first
force those coefficients to be proportional to the group
multiplicities. A variation of two group variables within
its zero hyperplane then rules out such a divisor when
$r\ge3$.

\begin{theorem}\label{thm:app520-factor}
Let $r\ge3$, let $m_1,\ldots,m_r$ be positive integers
with $\sum_{i=1}^r m_i=m$, and let $2\le k\le m$.
Then $E_k$ has no linear divisor
$L(z)=\sum_{i=1}^r\alpha_i z_i$ with
$\alpha_i>0$ for every $i\in[r]$.
\end{theorem}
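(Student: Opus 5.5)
Suppose, for contradiction, that $L(z)=\sum_i\alpha_iz_i$ with all $\alpha_i>0$ divides $E_k$ for some $2\le k\le m$. The plan has two stages, as announced before the statement. First, show that the coefficients must be proportional to the multiplicities, $\alpha_i=\lambda m_i$. Second, exhibit a point of the hyperplane $L=0$ where $E_k$ does not vanish.

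\emph{Stage 1 (proportionality).} I would restrict to two groups at a time. Fix $i\ne j$ and set $z_i=1$, $z_j=t$, and $z_\ell=0$ for $\ell\ne i,j$. Then $E_k$ specializes to $R_{m_i,m_j,k}(t)$, and $L$ specializes to $\alpha_i+\alpha_jt$. Hence, whenever the specialization of $E_k$ is not identically zero (that is, $k\le m_i+m_j$), the polynomial $R_{m_i,m_j,k}$ vanishes at the negative point $t=-\alpha_i/\alpha_j$. This alone does not give proportionality, so a stronger device is needed. Consider the substitution $z=(1,\ldots,1)+s\,w$, and note that $E_k(\mathbf 1+sw)$ expands through the power sums $\sum_\ell m_\ell w_\ell^q$. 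Since $L$ vanishes on the hyperplane $\sum\alpha_\ell z_\ell=0$, the idea is to compare with the one-variable polynomial $e_k(y,\ldots,y)=\binom mk y^k$ along the diagonal.

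A cleaner route is to use the $\mathfrak S_{m}$-structure. The polynomial $e_k(y_1,\ldots,y_m)$ is irreducible for $k\ge2$ as a polynomial in many variables, so the divisor only appears after grouping. I would differentiate in a group variable: $\partial E_k/\partial z_i=m_i\,E^{(i)}_{k-1}$, where $E^{(i)}_{k-1}$ is the grouped polynomial with $m_i$ replaced by $m_i-1$. Evaluating on the hyperplane at a generic point, the simplicity of negative roots (Lemma~\ref{lem:app520-simple}) ensures that the two-variable restrictions have $L$ as a simple factor. At such a zero the gradient of $E_k$ is therefore a nonzero multiple of $(\alpha_1,\ldots,\alpha_r)$. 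I would evaluate this gradient at the symmetric-looking point where two coordinates are chosen on the hyperplane, namely $z_i=1$, $z_j=t_0=-\alpha_i/\alpha_j$, and the rest $0$. Comparing the $i$-th and $j$-th partial derivatives should give $\alpha_i/\alpha_j=m_i/m_j$ after using $R(t_0)=0$ and Euler's identity for homogeneity. Doing this for all pairs yields $\alpha\propto\bm$.

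\emph{Stage 2 (variation in the hyperplane).} With $\alpha=\bm$ (up to scaling), use $r\ge3$. Take a point of $\{\sum m_\ell z_\ell=0\}$ with $z_3=\cdots=z_r=c$, and vary $z_1,z_2$ along the line $m_1z_1+m_2z_2=\text{const}$. Divisibility forces $E_k$ to vanish identically along this line for every $c$. Expanding $E_k$ in the elementary symmetric polynomials of the groups, the coefficient of the highest power of the free parameter can then be computed. Taking $c$ as a parameter, one can choose, for instance, $z_1=1$, $z_2=x$, and $z_3=\cdots=z_r$ determined by the hyperplane. This reduces the question to a two-variable polynomial identity that fails by comparing leading coefficients in $x$, which are products of binomials and are positive. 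Alternatively, one can reduce to a three-group configuration and obtain a contradiction with Stage 1 applied to a merged grouping: merging groups $2$ and $3$ is allowed when $z_2=z_3$, and the merged multiplicity must again match the coefficient, giving inconsistent constraints unless the configuration is degenerate.

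The main obstacle is Stage 1, specifically making rigorous that the gradient comparison at the zero $t_0$ yields exactly $m_i/m_j$. For this I would first try computing $\partial_{z_i}E_k$ and $\partial_{z_j}E_k$ at the point $(1,t_0,0,\ldots)$ as $m_iR_{m_i-1,m_j,k-1}(t_0)$ and $m_jR_{m_i,m_j-1,k-1}(t_0)$. I would then relate these via the identities $R_{A,B,k}=R_{A-1,B,k}+R_{A-1,B,k-1}$ and $R_{A,B,k}=R_{A,B-1,k}+tR_{A,B-1,k-1}$, together with simplicity to ensure nonvanishing.
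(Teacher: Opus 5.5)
Both stages of your proposal have genuine gaps.

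\emph{Stage~1: the comparison is a tautology.} At the point $z^\circ$ with $z_i=1$, $z_j=t_0=-\alpha_i/\alpha_j$ and $z_\ell=0$ for $\ell\notin\{i,j\}$, both partials $\partial_iE_k(z^\circ)=m_iR_{m_i-1,m_j,k-1}(t_0)$ and $\partial_jE_k(z^\circ)=m_jR_{m_i,m_j-1,k-1}(t_0)$ come from the two-variable form $P(s,t)=e_k(s^{[m_i]},t^{[m_j]})$. Euler's identity at its zero $(1,t_0)$ reads
\[
m_iR_{m_i-1,m_j,k-1}(t_0)+t_0\,m_jR_{m_i,m_j-1,k-1}(t_0)=kR_{m_i,m_j,k}(t_0)=0 .
\]
So the ratio of the two partials is $-t_0=\alpha_i/\alpha_j$. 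That is just $\nabla E_k(z^\circ)=Q(z^\circ)\nabla L$ again, and the recurrences you list cannot extract $m_i/m_j$ from it.

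Indeed, everything you compute lives in the $(z_i,z_j)$-plane, which is two-group data, and there the conclusion is false. For example, $e_2(z_1,z_1,z_2)=z_1(z_1+2z_2)$ has a simple negative root with $\alpha_1/\alpha_2=1/2\ne m_1/m_2=2$. Setting the other groups to $0$ also kills $E_k$ identically when $k>m_i+m_j$, e.g.\ for $\bm=(1,1,1,1)$ and $k=3$.

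The missing idea is the paper's choice of point. Put every group except $j$ equal to $1$, and set $z_j=v=-\sum_{i\ne j}\alpha_i/\alpha_j$. Then:
\begin{itemize}
\item[(i)] $E_k(z^*)=R_{m-m_j,m_j,k}(v)$ is never an identically vanishing restriction;
\item[(ii)] the simple-root lemma gives $\partial_jE_k(z^*)=R'_{m-m_j,m_j,k}(v)\ne0$, hence $Q(z^*)\ne0$;
\item[(iii)] all individual variables outside group $j$ share the value $1$, so by symmetry of $e_k$ one gets $\partial_iE_k(z^*)=m_i\rho$ for every $i\ne j$, with a common $\rho$.
\end{itemize}
Thus $\alpha_i/m_i$ is constant on $[r]\setminus\{j\}$. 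Since $r\ge3$, you can pick $j$ outside any given pair, and this is exactly where the third group is used.

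\emph{Stage~2 fails as written.} Take $z_1=1$, $z_2=x$ and $z_3=\cdots=z_r=-(m_1+m_2x)/M$, where $M=\sum_{\ell\ge3}m_\ell$. The coefficient of $x^k$ is then $e_k\bigl(1^{[m_2]},(-m_2/M)^{[M]}\bigr)$. This is an alternating sum, not a positive product of binomials, and it can vanish: for $\bm=(1,2,2)$ and $k=3$ it equals $e_3(1,1,-1,-1)=0$. The restricted polynomial there is $-x^2-x/2+1/4$, so the top coefficient gives no contradiction.

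The merging alternative gives no inconsistency either. Merging groups keeps $L$ proportional to the merged multiplicities, and for $r=3$ it leaves only two groups.

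The paper instead stays at the point $z_i=1$ $(i\ne j)$, $z_j=v=-A/B$, with $A=m-m_j$ and $B=m_j$. The argument runs as follows.
\begin{itemize}
\item[(i)] Matching the partials with $\nabla E_k(z^*)=Q(z^*)\,\bm$ gives $e_{k-1}(1^{[A-1]},v^{[B]})=e_{k-1}(1^{[A]},v^{[B-1]})\ne0$.
\item[(ii)] Hence $e_{k-2}(1^{[A-1]},v^{[B-1]})=0$, which is already impossible when $k=2$.
\item[(iii)] Euler's identity then forces $D=e_{k-2}(1^{[A-2]},v^{[B]})\ne0$.
\item[(iv)] Along the path $z_a=1+\varepsilon/m_a$, $z_b=1-\varepsilon/m_b$ with $a,b\ne j$, which stays in $L=0$, the second derivative of $E_k$ equals $-D(1/m_a+1/m_b)\ne0$.
\end{itemize}
Some second-order computation of this kind, at a well-chosen point of the hyperplane, is what your Stage~2 is missing.
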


\begin{proof}
Suppose, to the contrary, that $E_k=LQ$ for some
$Q\in\RR[z_1,\ldots,z_r]$. If $k=m$, then
$E_m(z)=\prod_{i=1}^r z_i^{m_i}$. At the point defined by
$z_i=1$ for $i\ne r$ and
$z_r=-\sum_{i\ne r}\alpha_i/\alpha_r$, all coordinates
are nonzero and $L(z)=0$, whereas $E_m(z)\ne0$.
This contradicts divisibility. Hence $2\le k<m$.

To distinguish derivatives in the group variables from
those in the individual variables of $e_k$, partition
$[m]$ into sets $I_1,\ldots,I_r$ with $|I_i|=m_i$.
Writing $y=(y_1,\ldots,y_m)$, we obtain $E_k(z)$ from
$e_k(y)$ by setting $y_s=z_i$ whenever $s\in I_i$.
Thus differentiation with respect to $z_i$ sums the
partial derivatives with respect to the $m_i$ variables
indexed by $I_i$.

Fix $j\in[r]$, and put $A=m-m_j$ and $B=m_j$.
Consider the point $z^*$ given by
$z_i^*=1$ for $i\ne j$ and
\[
z_j^*=v=-\frac{\sum_{i\ne j}\alpha_i}{\alpha_j}<0.
\]
Since $L(z^*)=0$, divisibility gives
$R_{A,B,k}(v)=E_k(z^*)=0$. By
Lemma~\ref{lem:app520-simple},
$R_{A,B,k}'(v)\ne0$. Differentiating $E_k=LQ$ at $z^*$
yields
\[
\partial_iE_k(z^*)=\alpha_iQ(z^*)
\qquad(i\in[r]).
\]
In particular,
$\alpha_jQ(z^*)=\partial_jE_k(z^*)
=R_{A,B,k}'(v)\ne0$, so $Q(z^*)\ne0$.

All individual variables outside $I_j$ have value $1$
at this point. By symmetry of $e_k$, its partial
derivatives with respect to these variables have a
common value, say $\rho$. The chain rule therefore gives
$\partial_iE_k(z^*)=m_i\rho$ for every $i\ne j$.
Consequently,
$\alpha_i/m_i=\rho/Q(z^*)$ is independent of $i\ne j$.
Given any two distinct indices $a,b\in[r]$, the
assumption $r\ge3$ allows a choice of
$j\notin\{a,b\}$. Hence
$\alpha_a/m_a=\alpha_b/m_b$, and all coefficients of
$L$ are proportional to the corresponding multiplicities.
Absorbing the common positive factor into $Q$, we may
therefore assume that $L(z)=\sum_{i=1}^r m_i z_i$.

It remains to exclude this divisor. Fix $j\in[r]$
again, and put $A=m-m_j$, $B=m_j$, $u=1$, and $v=-A/B$.
At the point $z^*$ with $z_i^*=u$ for $i\ne j$ and
$z_j^*=v$, we have $L(z^*)=Au+Bv=0$. Define
\[
R=e_{k-1}(u^{[A-1]},v^{[B]}),
\qquad
S=e_{k-1}(u^{[A]},v^{[B-1]}).
\]
Here and below, these expressions denote values at the
fixed pair $(u,v)=(1,-A/B)$. For $i\ne j$, the chain
rule gives $\partial_iE_k(z^*)=m_iR$, whereas
$\partial_jE_k(z^*)=BS$. On the other hand,
differentiating $E_k=LQ$ at $z^*$ gives
$\partial_iE_k(z^*)=m_iQ(z^*)$ for every $i$.
Thus $R=S=Q(z^*)$. Moreover,
$R_{A,B,k}(v)=E_k(z^*)=0$, and
Lemma~\ref{lem:app520-simple} gives
$BS=R_{A,B,k}'(v)\ne0$. Therefore $R=S\ne0$.

Since $A=\sum_{i\ne j}m_i\ge r-1\ge2$, the quantities
\[
D=e_{k-2}(u^{[A-2]},v^{[B]}),
\qquad
E=e_{k-2}(u^{[A-1]},v^{[B-1]})
\]
are well defined. Expanding $R$ and $S$ by their
respective final entries gives
\begin{align*}
R&=e_{k-1}(u^{[A-1]},v^{[B-1]})+vE,\\
S&=e_{k-1}(u^{[A-1]},v^{[B-1]})+uE.
\end{align*}
Hence $0=R-S=(v-u)E$. Since $v\ne u$, it follows
that $E=0$. If $k=2$, this contradicts $E=e_0=1$.
We may therefore assume that $k\ge3$.

Consider the homogeneous polynomial
$J(s,t)=e_{k-1}(s^{[A-1]},t^{[B]})$, of degree $k-1$.
At $(u,v)$, its partial derivatives are
$\partial_sJ(u,v)=(A-1)D$ and
$\partial_tJ(u,v)=BE$. Euler's identity therefore gives
\[
u(A-1)D+vBE=(k-1)R.
\]
Since $E=0$, $u=1$, $A\ge2$, and $R\ne0$, we conclude
that $D\ne0$.

Choose distinct indices $a,b\in[r]\setminus\{j\}$.
Starting from $z^*$, vary the group variables along
$z_a(\varepsilon)=u+\varepsilon/m_a$ and
$z_b(\varepsilon)=u-\varepsilon/m_b$, leaving all
other variables fixed. Then
$L(z(\varepsilon))=L(z^*)+\varepsilon-\varepsilon=0$
for every $\varepsilon\in\RR$. Divisibility implies
that $E_k(z(\varepsilon))$ is identically zero.

Let $y_s(\varepsilon)=z_i(\varepsilon)$ for $s\in I_i$,
and write $h_s=y_s'(0)$. These paths are affine, and
$h_s=1/m_a$ on $I_a$, $h_s=-1/m_b$ on $I_b$, and
$h_s=0$ elsewhere. In particular,
$\sum_{s=1}^m h_s=0$ and
$\sum_{s=1}^m h_s^2=1/m_a+1/m_b$.
The polynomial $e_k$ is affine in each individual
variable, so its diagonal second partial derivatives
vanish. For distinct indices $s,t$ with $h_sh_t\ne0$,
both corresponding variables have value $u$ at
$\varepsilon=0$. Removing these two variables therefore
gives
$\partial_{y_s}\partial_{y_t}e_k(y(0))=D$.
Since all second derivatives of the paths vanish,
the chain rule yields
\begin{align*}
\left.\frac{d^2}{d\varepsilon^2}
E_k(z(\varepsilon))\right|_{\varepsilon=0}
&=D\sum_{\substack{s,t\in[m]\\s\ne t}}h_sh_t\\
&=D\left[
 \left(\sum_{s=1}^m h_s\right)^2
 -\sum_{s=1}^m h_s^2
 \right]\\
&=-D\left(\frac1{m_a}+\frac1{m_b}\right)\ne0.
\end{align*}
This contradicts the identity
$E_k(z(\varepsilon))=0$ for every $\varepsilon$,
and completes the proof.
\end{proof}

\begin{remark}
Both restrictions $r\ge3$ and $k\ge2$ are necessary.
For $k=1$, the polynomial $E_1(z)=\sum_{i=1}^r m_i z_i$
is itself a linear form with positive coefficients.
For two groups with $m_1=m_2=2$, one has
$E_3(z_1,z_2)=2z_1z_2(z_1+z_2)$, which has the
positive-coefficient linear divisor $z_1+z_2$.
\end{remark}

Combining Theorem~\ref{thm:app520-factor} with the
divisibility criterion in Theorem~\ref{thm:nullspace}
gives a complete description of the bad indices for
$r\ge3$. No index $k\ge2$ is bad, as asserted in
\cite[Conjecture~5.20]{J26}, while the first index is
bad precisely for balanced profiles.

\begin{corollary}\label{cor:app520-bad}
Let $(\bm,\balpha)$ be an admissible profile with $r\ge3$.
Then
\[
\cK=
\begin{cases}
\{1\},&\balpha=\bm/m,\\
\varnothing,&\balpha\ne\bm/m.
\end{cases}
\]
\end{corollary}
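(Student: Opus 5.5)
The plan is to read off $\cK$ directly from its definition and handle the indices $k\ge2$ and $k=1$ separately.

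First dispose of the noncomplementary case. If $\sum_j\alpha_j<1$, then $\cK=\varnothing$ by definition. In this case $\balpha\ne\bm/m$, because $\sum_j m_j/m=1$. So the formula holds, and from now on we assume $\sum_j\alpha_j=1$.

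For $k\ge2$, apply Theorem~\ref{thm:app520-factor}. Since $r\ge3$, every $\alpha_j>0$, and $2\le k\le m$, the form $L_{\balpha}$ is a linear form with all coefficients positive. The theorem says $E_k$ has no such divisor, so $L_{\balpha}\nmid E_k$. Hence $\cK\subseteq\{1\}$.

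It remains to decide when $1\in\cK$. We have $E_1(z)=\sum_{j=1}^r m_jz_j$, which is a nonzero linear form, and $L_{\balpha}$ is also a linear form of degree one. Divisibility in $\RR[z_1,\ldots,z_r]$ between two nonzero degree-one forms holds exactly when the quotient is a nonzero constant $c$, that is, when $m_j=c\,\alpha_j$ for every $j$. Summing over $j$ gives $m=c\sum_j\alpha_j=c$, so the condition is precisely $\alpha_j=m_j/m$ for all $j$. Conversely, if $\balpha=\bm/m$, then $\sum_j\alpha_j=1$ holds automatically and $E_1=mL_{\balpha}$, so $1\in\cK$.

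Combining the two cases gives the displayed formula. There is no real obstacle: all the substantive work sits in Theorem~\ref{thm:app520-factor}. The only care needed is the complementarity convention, namely that the balanced profile automatically satisfies $\sum_j\alpha_j=1$ and that noncomplementary profiles are never balanced.
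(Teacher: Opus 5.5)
Your proposal is correct and follows the paper's proof essentially step for step. It handles the noncomplementary case by definition, uses Theorem~\ref{thm:app520-factor} to exclude every $k\ge2$, and compares the linear forms $E_1$ and $L_{\balpha}$, with the constant fixed as $m$ by summing coefficients, to decide whether $1\in\cK$.
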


\begin{proof}
If $\sum_{i=1}^r\alpha_i<1$, then $\cK=\varnothing$
by definition. In this case, $\balpha\ne\bm/m$, since
the entries of $\bm/m$ sum to $1$.

Suppose that $\sum_{i=1}^r\alpha_i=1$.
The divisibility criterion in Theorem~\ref{thm:nullspace},
together with Theorem~\ref{thm:app520-factor}, gives
$\cK\subseteq\{1\}$. Moreover, $1\in\cK$ if and only if
$L_{\balpha}$ divides $E_1(z)=\sum_{i=1}^r m_i z_i$.
Since both polynomials are nonzero and linear, this is
equivalent to $E_1=cL_{\balpha}$ for some constant $c>0$,
or $m_i=c\alpha_i$ for every $i$. Summing these identities
gives $c=m$. Conversely, if $\balpha=\bm/m$, then
$E_1=mL_{\balpha}$. Hence $1\in\cK$ precisely when
$\balpha=\bm/m$, proving the assertion.
\end{proof}

\begin{revision}
To complete Theorem~\ref{thm:repeated}, we combine
Corollary~\ref{cor:app520-bad} with additive rigidity and
the following single-edge criterion of Janson.
\end{revision}

\begin{lemma}[Janson~{\cite[Example~6.10]{J26}}]
\label{lem:single-edge}
Let $F$ be a simple graph with $e(F)=1$, let
$(\bm,\balpha)$ be an admissible profile, and let $0<p<1$.
Then $\tP^p_{\bm}(F;\balpha)$ is a $p$-quasirandom
property if and only if $\cK\cap\{1,2\}=\varnothing$.
\end{lemma}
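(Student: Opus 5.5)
Since $e(F)=1$, the graph $F$ is a single edge, say $\{1,2\}$, together with $m-2$ isolated vertices, and $\Psi_{F,W}(\mathbf x)=W(x_1,x_2)$. The plan is to compute the Walsh decomposition of $\tPsi_{F,W}$ explicitly. This decomposition occupies only the levels $0,1,2$. We then read off both directions from the kernel test \eqref{eq:kernel-test}, combined with Lemma~\ref{lem:transfer} and Theorem~\ref{thm:nullspace}.

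First, decompose the graphon itself. Put $\bar W=\int W$ and $w(x)=\int_0^1W(x,y)\,dy-\bar W$. Define $W_2(x,y)=W(x,y)-\bar W-w(x)-w(y)$. Then $w$ has mean zero, and $W_2$ is symmetric with integral zero in each coordinate. Symmetrizing over $\mathfrak S_m$ gives
\[
\tPsi_{F,W}-p=(\bar W-p)+\frac{2}{m}\sum_{i=1}^m w(x_i)+\binom m2^{-1}\sum_{i<j}W_2(x_i,x_j).
\]
The three terms lie in $\HH_{m,0}$, $\HH_{m,1}$ and $\HH_{m,2}$ respectively. The coefficient $2/m$ arises because each index $i$ lies in $m-1$ of the $\binom m2$ pairs. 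By orthogonality of the levels and Theorem~\ref{thm:nullspace}, the integral identities hold if and only if three conditions are met: $\bar W=p$ (the index $0$ is never bad), $w=0$ unless $1\in\cK$, and $W_2=0$ unless $2\in\cK$.

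For sufficiency, suppose $\cK\cap\{1,2\}=\varnothing$. Any graphon satisfying the identities then has $\bar W=p$, $w=0$ and $W_2=0$, so $W=p$ almost everywhere. Lemma~\ref{lem:transfer} then gives forcing.

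For necessity, we construct nonconstant graphons satisfying the identities. We use a bounded measurable $g$ with $\int_0^1g=0$ and $g\ne0$, for instance $g=\mathbf 1_{[0,1/2)}-\mathbf 1_{[1/2,1)}$, and a small $\varepsilon>0$. If $1\in\cK$, take $W(x,y)=p+\varepsilon(g(x)+g(y))$. If $2\in\cK$, take $W(x,y)=p+\varepsilon g(x)g(y)$. Because $0<p<1$, a small enough $\varepsilon$ keeps $W$ in $[0,1]$. This is the only place where the strict inequalities on $p$ are used.

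In each construction the centred symmetrized kernel is a nonzero element of a single bad level. In the first case it is $\frac{2\varepsilon}{m}\sum_ig(x_i)\in\HH_{m,1}$. In the second it is $\varepsilon\binom m2^{-1}\sum_{i<j}g(x_i)g(x_j)\in\HH_{m,2}$; here $g\otimes g$ is symmetric with zero marginals, as the definition of $\HH_{m,2}$ requires. Hence the kernel lies in $\cM_{\bm,\balpha}$, and $W$ satisfies the identities although it is not constant. By Lemma~\ref{lem:transfer}, the property is therefore not forcing.

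The step that needs the most care is the bookkeeping in the Walsh decomposition. One must check that the level-$1$ and level-$2$ pieces of $\tPsi_{F,W}$ are exactly multiples of $\sum_iw(x_i)$ and $\sum_{i<j}W_2(x_i,x_j)$, so that the orthogonal sum in Theorem~\ref{thm:nullspace} can be applied componentwise. Everything else is routine.
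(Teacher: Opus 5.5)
Your proof is correct. The paper does not prove this lemma; it cites \cite[Example~6.10]{J26}. Only after the proof of Theorem~\ref{thm:repeated} does it check, by integrating directly over the admissible products, that $p+\varepsilon(\sigma(x)+\sigma(y))$ is a nonconstant witness for balanced profiles.

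You instead derive the full equivalence from Theorem~\ref{thm:nullspace} and Lemma~\ref{lem:transfer}. You write $\tPsi_{F,W}=\binom{m}{2}^{-1}\sum_{i<j}W(x_i,x_j)$ and split $W=\bar W+w(x)+w(y)+W_2$. This exhibits the level-$0$, $1$ and $2$ components of the centred kernel explicitly, and uniqueness of the orthogonal level decomposition then gives both directions at once. This treats sufficiency and both possible obstructions in a single framework. Your level-$1$ witness is exactly the paper's. Your level-$2$ witness $p+\varepsilon g(x)g(y)$ is the single-edge analogue of the graphon $a+b\sigma(x)\sigma(y)$ in Section~\ref{sec:counterexamples}. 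That second witness is genuinely needed for profiles with $r\le2$ where $2\in\cK$, for instance $((2,1),(1/3,2/3))$ with $F$ an edge plus an isolated vertex. For $r\ge3$, the only case the paper uses, Corollary~\ref{cor:app520-bad} makes it unnecessary.

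Two small points:
\begin{itemize}
\item You should state that $\sum_i w(x_i)=0$ and $\sum_{i<j}W_2(x_i,x_j)=0$ almost everywhere imply $w=0$ and $W_2=0$. This follows by integrating out all but one, respectively two, coordinates and using the zero marginals.
\item Your remark that the strict bounds on $p$ are used only to keep $W$ in $[0,1]$ is not quite accurate. Lemma~\ref{lem:transfer}, which you use for sufficiency, is itself stated only for $0<p<1$.
\end{itemize}
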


\begin{proof}[Proof of Theorem~\ref{thm:repeated}]
By Theorem~\ref{thm:nullspace} and
Corollary~\ref{cor:app520-bad},
$\cM_{\bm,\balpha}\subseteq\HH_{m,1}$.
If $e(F)>1$, Corollary~\ref{cor:additive-profile}
therefore shows that
$\tP^p_{\bm}(F;\balpha)$ is a $p$-quasirandom property.

If $e(F)=1$, Lemma~\ref{lem:single-edge} shows that
the property is $p$-quasirandom if and only if
$\cK\cap\{1,2\}=\varnothing$. By
Corollary~\ref{cor:app520-bad}, this is equivalent to
$\balpha\ne\bm/m$. The two cases give the asserted
classification.
\end{proof}

\begin{revision}
The exceptional balanced single-edge profiles admit an
explicit two-type witness. Fix $0<p<1$, let
$\sigma=\mathbf{1}_{[0,1/2)}-\mathbf{1}_{[1/2,1]}$,
choose $0<\varepsilon<\frac12\min\{p,1-p\}$, and set
\[
W(x,y)=p+\varepsilon\bigl(\sigma(x)+\sigma(y)\bigr).
\]
Then $W$ is a nonconstant graphon. If $e(F)=1$, averaging
the positions of its two endpoints gives
\[
\tPsi_{F,W}-p
=\frac{2\varepsilon}{m}\sum_{i=1}^m\sigma(x_i).
\]
Consequently, for every admissible family $A_1,\ldots,A_r$,
\[
\begin{aligned}
&\int_{A_1^{m_1}\times\cdots\times A_r^{m_r}}
  (\tPsi_{F,W}-p)\\
&\qquad=\frac{2\varepsilon}{m}
  \left(\prod_{j=1}^r\alpha_j^{m_j}\right)
  \sum_{j=1}^r\frac{m_j}{\alpha_j}\int_{A_j}\sigma.
\end{aligned}
\]
If $\alpha_j=m_j/m$ for every $j$, the sets form a
partition up to null sets, and the final sum equals
$m\int_0^1\sigma=0$. Lemma~\ref{lem:transfer} therefore
gives a graph sequence satisfying the balanced symmetrized
condition that is not $p$-quasirandom.
\end{revision}

\section{Labelled transversal counts}\label{sec:transversal}

\begin{revision}
We now prove Theorem~\ref{thm:transversal}. The case
$\sum_i\alpha_i<1$ is already covered by
\cite[Theorem~2.11(ii)]{JS}. For complementary proportions,
the calculation following Lemma~\ref{lem:HHL} reduces
the symmetrized discrepancy to an additive function,
so Corollary~\ref{cor:additive} applies when $e(F)>1$.
For graphs with exactly one edge, we use the following
classical edge-count classification; see
\cite[Example~2.10 and Theorem~2.11]{JS}.
\end{revision}

\begin{lemma}[Edge-count classification]
\label{lem:edge-transversal}
Let $0<p<1$, and let $\beta_1,\beta_2>0$ satisfy
$\beta_1+\beta_2\le1$.
Then $\mathcal P^p(K_2;\beta_1,\beta_2)$ is a
$p$-quasirandom property if and only if
$(\beta_1,\beta_2)\ne(1/2,1/2)$.
\end{lemma}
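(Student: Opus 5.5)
The plan is to reduce to graphon identities via the transfer principle and treat the two directions separately. Every $p$-quasirandom sequence satisfies the condition by Lemma~\ref{lem:uniform-counting}, so only the converse needs work. By Lemma~\ref{lem:transfer} with $F=K_2$ and $m=2$, forcing is equivalent to the following: every graphon $W$ with $\int_{A_1\times A_2}W=p\beta_1\beta_2$ for all disjoint $A_1,A_2$ of measures $\beta_1,\beta_2$ is constant $p$. Put $f=W-p$, so that $\int_{A_1\times A_2}f=0$ for every admissible pair.

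If $\beta_1+\beta_2<1$, Lemma~\ref{lem:HHL} gives $f=0$ almost everywhere directly. Suppose instead that $\beta_1+\beta_2=1$. Lemma~\ref{lem:HHL} then gives $f_\varnothing=0$, and the component $f_{\{1,2\}}$ is alternating. Since $f$ is symmetric, its Walsh components satisfy $f_{\{1,2\}}(x,y)=f_{\{1,2\}}(y,x)$. Alternation then forces $f_{\{1,2\}}=0$. Symmetry also gives $f_{\{1\}}(x)=h(x)$ and $f_{\{2\}}(y)=h(y)$ for a single mean-zero $h$. Hence
\[
f(x,y)=h(x)+h(y),\qquad \int_0^1 h=0.
\]

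Since $A_2=[0,1]\setminus A_1$ up to a null set, the identity becomes
\[
\beta_2\int_{A_1}h+\beta_1\int_{A_2}h=(\beta_2-\beta_1)\int_{A_1}h=0,
\]
using $\int_{A_2}h=-\int_{A_1}h$. If $\beta_1\ne\beta_2$, then $\int_{A_1}h=0$ for every set of measure $\beta_1\in(0,1)$. By the standard exchange argument, swapping small pieces of equal measure between $A_1$ and its complement, $h$ is then constant almost everywhere. As $h$ has mean zero, this gives $h=0$, so $W=p$.

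This exchange step is the only place needing care, and I expect it to be the main obstacle. I would handle it by Lebesgue differentiation. Assume two Lebesgue points $x,x'$ have $h(x)\ne h(x')$. Choose a set $A_1$ containing a small interval around $x$ but not one around $x'$, then swap those intervals. The value of $\int_{A_1}h$ changes by a nonzero amount, which is a contradiction.

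For $(\beta_1,\beta_2)=(1/2,1/2)$, the identity above holds automatically for every mean-zero $h$. Take the graphon $W=p+\varepsilon(\sigma(x)+\sigma(y))$ from the end of Section~\ref{sec:repeated}, with $0<\varepsilon<\frac12\min\{p,1-p\}$. It is a nonconstant graphon satisfying all the identities. The "more precisely" clause of Lemma~\ref{lem:transfer} then turns it into a non-quasirandom graph sequence that satisfies the condition. Alternatively, one can invoke the $e(F)=1$ case of Lemma~\ref{lem:single-edge}: here $\cK=\{1\}$ exactly when $\balpha=(1/2,1/2)$, since $E_2=z_1z_2$ is never divisible by a positive linear form.
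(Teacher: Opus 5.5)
Your proof is correct, but it does not follow the paper's route. The paper gives no argument for this lemma. It quotes the result from Janson and S\'os: their Example~2.10 for the exceptional profile and their Theorem~2.11 for forcing. The only related argument in the paper is the remark at the end of Section~\ref{sec:transversal}: the two-type graphon $p+\varepsilon(\sigma(x)+\sigma(y))$ of Section~\ref{sec:repeated} witnesses the profile $(1/2,1/2)$. That is exactly your counterexample.

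For the forcing direction you stay inside the paper's own toolkit, namely Lemma~\ref{lem:transfer} together with Lemma~\ref{lem:HHL}. This is the same additive reduction the paper uses for Theorem~\ref{thm:transversal} when $e(F)>1$. For $K_2$ the symmetry of $W$ already kills the alternating component, so no symmetrization step is needed. The identity $(\beta_2-\beta_1)\int_{A_1}h=0$ then shows transparently why $(1/2,1/2)$ is the only exception. What your route buys is a self-contained proof that relies only on the Hatami--Hatami--Li characterization, not on the separate Janson--S\'os analysis.

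Two small remarks. First, the step you flag as the main obstacle is just the case $m=1$ of Lemma~\ref{lem:HHL} with $\alpha_1=\beta_1<1$, which gives $h=0$ at once. Your Lebesgue-point exchange argument is still a valid elementary proof of that case. Second, if you use the alternative via Lemma~\ref{lem:single-edge}, state explicitly that labelled and symmetrized edge counts coincide for $K_2$, since that lemma is formulated for $\tP$ rather than $\mathcal P$.
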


\begin{proof}[Proof of Theorem~\ref{thm:transversal}]
Every $p$-quasirandom graph sequence satisfies the
labelled counting condition by
Lemma~\ref{lem:uniform-counting}.
If $\sum_{i=1}^m\alpha_i<1$, the conclusion follows
from \cite[Theorem~2.11(ii)]{JS}.
If $F=K_2$, the asserted classification is
Lemma~\ref{lem:edge-transversal}.
Hence assume that $\sum_{i=1}^m\alpha_i=1$ and
$F\ne K_2$, so $m\ge3$.

By Lemma~\ref{lem:transfer}, it suffices to show that
every graphon $W$ satisfying
\begin{equation}\label{eq:labelled-transversal-test}
\int_{A_1\times\cdots\times A_m}\Psi_{F,W}
=p^{e(F)}\prod_{i=1}^m\alpha_i
\end{equation}
for all pairwise disjoint measurable sets
$A_1,\ldots,A_m$ with $\lambda(A_i)=\alpha_i$
is equal to $p$ almost everywhere.

First suppose that $e(F)>1$, and put
$f=\Psi_{F,W}-p^{e(F)}$.
Then $f\in L^2([0,1]^m;\RR)$, and
\eqref{eq:labelled-transversal-test} gives
$\int_{A_1\times\cdots\times A_m}f=0$
for every admissible tuple.
\begin{revision}
By Lemma~\ref{lem:HHL} and the symmetrization calculation
following it, there is a function $h_0\in L^2([0,1];\RR)$
with $\int_0^1h_0=0$ such that
\[
\tPsi_{F,W}(x_1,\ldots,x_m)
=p^{e(F)}+\sum_{j=1}^m h_0(x_j)
\quad\text{almost everywhere}.
\]
\end{revision}
Set $h=h_0+p^{e(F)}/m$. Then
$h\in L^2([0,1])\subseteq L^1([0,1])$,
$\int_0^1h=p^{e(F)}/m>0$, and
$\tPsi_{F,W}=\sum_{j=1}^m h(x_j)$ almost everywhere.
Corollary~\ref{cor:additive} yields
$W=(m\int_0^1h)^{1/e(F)}=p$ almost everywhere.

It remains to consider $e(F)=1$.
Let $ab$ be the unique edge of $F$, so
$\Psi_{F,W}(x)=W(x_a,x_b)$.
Take arbitrary disjoint measurable sets $A_a,A_b$
with measures $\alpha_a,\alpha_b$.
Their complement has measure
$1-\alpha_a-\alpha_b
=\sum_{i\notin\{a,b\}}\alpha_i$.
By nonatomicity of Lebesgue measure, they extend
to a full pairwise disjoint tuple
$A_1,\ldots,A_m$ of the prescribed measures.
Fubini's theorem and
\eqref{eq:labelled-transversal-test} give
\[
\left(\prod_{i\notin\{a,b\}}\alpha_i\right)
\int_{A_a\times A_b}W(x,y)\,dx\,dy
=p\prod_{i=1}^m\alpha_i.
\]
Dividing by the positive product yields
$\int_{A_a\times A_b}(W-p)=0$ for every such pair.
Since $m\ge3$ and all remaining proportions are
positive, $\alpha_a+\alpha_b<1$.
Thus the labelled test for $F$ implies an edge test
with proportions whose sum is strictly less than one.
Applying the two-variable case of
Lemma~\ref{lem:HHL} to $W-p$ gives
$W=p$ almost everywhere.

Lemma~\ref{lem:transfer} now completes the proof.
\end{proof}

\begin{revision}
For $F=K_2$, the single-edge construction at the end of
Section~\ref{sec:repeated}, with $m=r=2$, gives a two-type
witness for the exceptional profile $(1/2,1/2)$, since
labelled and symmetrized edge counts coincide. Thus the
classification answers \cite[Problems~2.15 and~2.17]{JS}
affirmatively.
The distinction between labelled and symmetrized counts
is already visible for one edge and an isolated vertex:
every labelled transversal profile is forcing, whereas the
balanced symmetrized profile is not. We next consider
two repeated classes, where a higher Walsh component can
survive the restricted averages.
\end{revision}

\section{Two-class obstructions and counterexamples}\label{sec:counterexamples}

For the profile
$(\bm,\balpha)=((2,1),(1/3,2/3))$, the grouped polynomial
$E_2(z_1,z_2)=z_1(z_1+2z_2)=3z_1L_{\balpha}(z)$
has the prescribed linear divisor. Hence $2\in\cK$,
and the restricted averages annihilate $\HH_{3,2}$.
In this section, we prove Proposition~\ref{prop:crossing}
by constructing nonconstant graphons for every $p\in(0,1)$,
and then give a finite graph construction at $p=4^{-1/3}$.

For $F=K_3$, labelled and symmetrized counts coincide.
Let $G$ have $N$ vertices, and let $U,V\subseteq V(G)$
be disjoint with $|U|=\lfloor N/3\rfloor$ and
$|V|=\lfloor2N/3\rfloor$. Then $V\subseteq U^c$ and
$|U^c\setminus V|\le1$, so replacing $V$ by $U^c$
changes $N(K_3,G;U,U,V)$ by at most $N^2=o(N^3)$.
It therefore suffices to establish the graphon identities
over $A^2\times A^c$ with $\lambda(A)=1/3$.

\subsection{Counterexamples for every density parameter}

Let $\sigma:[0,1]\to\{-1,1\}$ be measurable, taking
each value on a set of measure $1/2$, and put
$Q_\sigma(x_1,x_2,x_3)
=\sum_{1\le i<j\le3}\sigma(x_i)\sigma(x_j)$.
Then $Q_\sigma\in\HH_{3,2}$ and
$\int_{A^2\times A^c}Q_\sigma=0$ whenever
$\lambda(A)=1/3$.
For each $p\in(0,1)$, the construction below gives
a nonconstant graphon $W$ satisfying
$\Psi_{K_3,W}-p^3=cQ_\sigma$ for some $c\ne0$
and $\int_{[0,1]^2}W<p$.
Approximating $W$ in cut distance by finite simple
graphs~\cite{LS} and applying
Lemma~\ref{lem:transfer} then yields the required
graph sequence.

\begin{proof}[Proof of Proposition~\ref{prop:crossing}]
Fix $0<p<1$. Let $B=[0,1/2)$ and
$\sigma=\mathbf{1}_B-\mathbf{1}_{B^c}$, so that
$\int_0^1\sigma=0$ and $\sigma^2=1$. Set
\[
b=\frac14\min\{p,1-p\},
\qquad
a=(p^3-b^3)^{1/3},
\qquad
W(x,y)=a+b\sigma(x)\sigma(y).
\]
Since $0<b\le p/4$, we have
$a^3=p^3-b^3\ge63b^3$, and hence $a>b>0$.
Moreover, $a<p$ and $b\le(1-p)/4$, so
$0<a-b\le W(x,y)\le a+b
<p+(1-p)/4<1$.
Thus $W$ is a graphon. It takes the value $a+b$
on $B^2\cup(B^c)^2$ and $a-b$ on
$(B\times B^c)\cup(B^c\times B)$, so it is
nonconstant and has two types. Its edge density is
$\int_{[0,1]^2}W=a+b(\int_0^1\sigma)^2=a<p$.

Define
$Q_\sigma(x_1,x_2,x_3)
=\sum_{1\le i<j\le3}\sigma(x_i)\sigma(x_j)$.
Writing $\sigma_i=\sigma(x_i)$ and using
$\sigma_i^2=1$, we obtain
\begin{align*}
\Psi_{K_3,W}(x_1,x_2,x_3)
&=(a+b\sigma_1\sigma_2)
  (a+b\sigma_1\sigma_3)
  (a+b\sigma_2\sigma_3)\\
&=a^3+b^3+(a^2b+ab^2)Q_\sigma(x_1,x_2,x_3)\\
&=p^3+ab(a+b)Q_\sigma(x_1,x_2,x_3).
\end{align*}
The function $g(x,y)=\sigma(x)\sigma(y)$ is symmetric
and has integral zero in each coordinate, so
$Q_\sigma\in\HH_{3,2}$. Moreover, $Q_\sigma=3$ on
$B^3$, which has positive measure. Since
$ab(a+b)>0$, the discrepancy
$\Psi_{K_3,W}-p^3$ is a nonzero element of
$\HH_{3,2}$.

Let $A\subseteq[0,1]$ be measurable, and put
$\alpha=\lambda(A)$ and $z=\int_A\sigma$.
Then $\int_{A^c}\sigma=-z$. Fubini's theorem gives
\begin{equation}\label{eq:pair-cancellation}
\int_{A^2\times A^c}Q_\sigma
=(1-\alpha)z^2-\alpha z^2-\alpha z^2
=(1-3\alpha)z^2.
\end{equation}
This is the special case
$g(x,y)=\sigma(x)\sigma(y)$ of
\cite[(9.4)]{JS}. In particular, if $\lambda(A)=1/3$,
then
\[
\int_{A^2\times A^c}\Psi_{K_3,W}
=p^3\lambda(A)^2\lambda(A^c).
\]
If $A,C$ are disjoint measurable sets of measures
$1/3$ and $2/3$, respectively, then $C=A^c$
up to a null set. Hence $W$ satisfies all the
prescribed integral identities for the profile
$((2,1),(1/3,2/3))$. Since $K_3$ is complete,
its labelled and symmetrized kernels coincide.

By the standard graphon approximation
theorem~\cite{LS}, choose finite simple graphs $G_n$
with $v_n=v(G_n)\to\infty$ and
$\delta_\square(W_{G_n},W)\to0$.
Lemma~\ref{lem:transfer} gives
\[
N(K_3,G_n;U,U,V)
=p^3|U|^2|V|+o(v_n^3)
\]
uniformly over all disjoint sets
$U,V\subseteq V(G_n)$ with
$|U|=\lfloor v_n/3\rfloor$ and
$|V|=\lfloor2v_n/3\rfloor$.
Since integration is invariant under measure-preserving
relabellings and the cut norm controls the integral
over $[0,1]^2$,
\[
\left|\frac{2e(G_n)}{v_n^2}-a\right|
=\left|\int_{[0,1]^2}W_{G_n}
       -\int_{[0,1]^2}W\right|
\le\delta_\square(W_{G_n},W)\longrightarrow0.
\]
Thus the limiting edge density is $a<p$, so
$(G_n)$ is not $p$-quasirandom.

In fact, $(G_n)$ is not quasirandom at any density.
If it were $q$-quasirandom for some $q\in[0,1]$,
its limiting edge density would give $q=a$.
On the other hand,
Lemma~\ref{lem:uniform-counting} and the prescribed
crossing-triangle counts would imply $q^3=p^3$,
since $|U|^2|V|=(2/27+o(1))v_n^3$.
Hence $q=p$, contradicting $a<p$.
\end{proof}

\begin{revision}
The construction realizes the cancellation in
\eqref{eq:pair-cancellation} as the triangle-kernel discrepancy
of a nonconstant graphon with edge density $a<p$.
Thus the prescribed triangle counts do not force the
edge density to equal the parameter $p$.
\end{revision}

For this profile,
$E_1=2z_1+z_2$,
$E_2=z_1(z_1+2z_2)$, and
$E_3=z_1^2z_2$.
Since $L_{\balpha}=(z_1+2z_2)/3$,
Theorem~\ref{thm:nullspace} gives $\cK=\{2\}$.
The discrepancy
$\Psi_{K_3,W}-p^3=ab(a+b)Q_\sigma$
is therefore a nonzero second-order Walsh component.
This realizes a quadratic obstruction and shows why
the additive reduction used in
Theorem~\ref{thm:repeated} does not extend to all
two-class profiles.

\subsection{Two disjoint cliques}

The preceding construction applies to every $p\in(0,1)$.
At $p=4^{-1/3}$, the same cancellation yields an explicit
sequence of finite graphs, with a uniform counting error
of order $v(G)^2$.

\begin{example}\label{ex:two-cliques}
For each positive integer $t$, let
$G_t=K_{3t}\sqcup K_{3t}$ and $p=4^{-1/3}$.
Write $L,R$ for the vertex sets of the two cliques.
For $U\subseteq V(G_t)$ with $|U|=2t$, put
$x=|U\cap L|$ and $y=|U\cap R|$, so $x+y=2t$.
Every triangle lies entirely in one clique, and hence
\begin{align*}
N(K_3,G_t;U,U,U^c)
&=x(x-1)(3t-x)+y(y-1)(3t-y)\\
&=4t^3-2t^2-2xy,
\end{align*}
where the second equality follows from
$x^2+y^2=4t^2-2xy$ and
$x^3+y^3=8t^3-6txy$.
Since $0\le xy\le t^2$ and
$p^3|U|^2|U^c|=4t^3$, the counting error is at most
$4t^2$, uniformly over $U$.
Every admissible pair of disjoint sets of sizes
$2t$ and $4t$ is complementary, so $(G_t)$ satisfies
the prescribed crossing-triangle condition.
However,
$2e(G_t)/v(G_t)^2=1/2-1/(6t)\to1/2<p$,
and therefore $(G_t)$ is not $p$-quasirandom.
\end{example}

\subsection{The remaining two-class problem}

For $0<p<1$ and $e(F)>0$,
Theorems~\ref{thm:repeated} and~\ref{thm:transversal}
classify symmetrized repeated-class tests with at least
three classes and labelled transversal tests, respectively.
For two repeated classes and $e(F)>1$, profiles with
$\alpha_1+\alpha_2<1$ are forcing by
Theorem~\ref{thm:nullspace} and
Corollary~\ref{cor:additive-profile}.
Thus the remaining classification concerns complementary
two-class profiles.

Both constructions above have limiting edge density
strictly less than $p$. They do not settle the variant
in which the limiting edge density is additionally
required to equal $p$.

\begin{problem}
Let $F$ be a simple graph on $m$ vertices with $e(F)>1$,
let $m_1,m_2$ be positive integers with $m_1+m_2=m$,
and let $0<p<1$. Determine all $\alpha\in(0,1)$ for which
$\tP^p_{m_1,m_2}(F;\alpha,1-\alpha)$ is a
$p$-quasirandom property.
\end{problem}

For a complementary profile, homogeneity and the
divisibility criterion in Theorem~\ref{thm:nullspace}
give
\[
\cK=
\left\{
k\in[m]:
R_{m_1,m_2,k}\!\left(-\frac{\alpha}{1-\alpha}\right)=0
\right\},
\]
where $R_{m_1,m_2,k}$ is defined in
Lemma~\ref{lem:app520-simple}.
Thus the functional nullspace is explicitly determined
by univariate polynomial zeros.
If $\cK\subseteq\{1\}$,
Corollary~\ref{cor:additive-profile} gives the forcing
property. When higher bad indices occur, the remaining
question is whether the corresponding nullspace contains
$\tPsi_{F,W}-p^{e(F)}$ for a nonconstant graphon $W$.
Proposition~\ref{prop:crossing} provides such a realization
for $F=K_3$ and $\cK=\{2\}$.

A further question is whether every nonforcing test in
this family admits a two-type graphon witness, as predicted
by \cite[Conjecture~6.6]{J26}.
All counting properties considered here concern
non-induced copies.

\section*{Declaration of generative AI and AI-assisted technologies}
The authors used \rev{a large language model} during the preparation of this manuscript
for assistance with mathematical exploration, checking arguments,
organization and language editing. The authors are responsible for
the mathematical statements, proofs, references and final text.

{\small
\noindent\rev{\textsc{Guoqing Wang}}\par
\noindent\textsc{School of Mathematics, Tiangong University}\par
\noindent\emph{Email address:} \texttt{gqwang1979@aliyun.com}\par
\medskip
\noindent\rev{\textsc{Linlin Wang}}\par
\noindent\textsc{School of Mathematics, China University of Mining and Technology}\par
\noindent\emph{Email address:} \texttt{5400@cumt.edu.cn}\par
\medskip
\noindent\rev{\textsc{Jun Yue}}\par
\noindent\textsc{School of Mathematics, Tiangong University}\par
\noindent\emph{Email address:} \texttt{yuejun06@126.com}\par
}

\end{document}